%% file: DDP_2026_4_19_Submitted.tex
\documentclass[11pt]{article}
\usepackage[margin=1in]{geometry}
\usepackage{amsmath}
\usepackage{amsfonts}
\usepackage{color}
\usepackage{latexsym}
\usepackage{tablefootnote}
\usepackage{epsfig}
\usepackage{multirow}
\usepackage{float}
\usepackage{array}
\usepackage{amssymb,amsthm}
\usepackage{hyperref}    
\usepackage{algorithm2e} 
\usepackage{cleveref}    

\newcommand*{\Q}{\mathcal{Q}}
\newcommand*{\V}{\mathcal{V}}

\newcommand{\red}[1]{{\color{red}{#1}}}

\definecolor{antiquefuchsia}{rgb}{0.57, 0.36, 0.51}
\definecolor{MyViolet}{rgb}{0.45,0.08,0.95}
\definecolor{MyBrown}{rgb}{0.45,0.08,0}
\definecolor{MyDarkBlue}{rgb}{0,0.08,0.45}

\usepackage{makecell,booktabs}
\usepackage[version=4]{mhchem}   
\usepackage[strict]{changepage}

\include{def}

\newcommand{\m}{m}

 \title{Dimension-Free Complexity Guarantees for Dual Dynamic Programming}

 \author{
 Pablo Barros \thanks{School of Applied Mathematics FGV/EMAp, 22250-900 Rio de Janeiro, Brazil. (email: {\tt pabloacbarros@gmail.com}).} 
        \qquad
		Vincent Guigues \thanks{School of Applied Mathematics FGV/EMAp, 22250-900 Rio de Janeiro, Brazil. (email: {\tt vincent.guigues@fgv.br}).} 
        \qquad
		Jiaming Liang \thanks{
        Goergen Institute for Data Science and Artificial Intelligence and Department of Computer Science, University of Rochester, Rochester, NY 14620 (email: {\tt jiaming.liang@rochester.edu}). This work was partially supported by AFOSR grant FA9550-25-1-0182.}
        \qquad
            Renato D.C. Monteiro \thanks{School of Industrial and Systems
			Engineering, Georgia Institute of
			Technology, Atlanta, GA 30332.
			(email: {\tt renato.monteiro@isye.gatech.edu}). This work
			was partially supported by AFOSR Grants FA9550-22-1-0088 and FA9550-25-1-0131.}
	}

\date{June 10, 2026}

\begin{document}

\maketitle

	\begin{abstract} 

This paper studies the complexity of a dual dynamic programming (DDP) method for solving a class of convex optimization problems with linear coupling constraints.
Existing complexity results based on DDP depend on the dimensions of the state vectors and, in particular, grow exponentially with dimension. The goal of this paper is to establish a complexity bound that is independent of the dimension.
Our approach first studies an unconstrained strongly convex problem and develops a flexible framework for DDP, called FDDP, for solving the associated dynamic programming equations, and establishes 
an iteration-complexity bound for it that is independent of the dimension.
A dimension-independent complexity bound for the original linearly constrained problem
is then obtained by
applying FDDP to a corresponding unconstrained strongly convex problem obtained via
smoothing and penalization.
Inspired by the literature on bundle methods, FDDP updates lower approximations of cost-to-go functions through a generic procedure that includes both the classical multi-cut DDP and a new two-cut DDP variant as special cases. The two-cut variant maintains only two affine cuts per stage at each iteration, making it more memory-efficient while retaining the same theoretical guarantees.
Finally, numerical experiments illustrate the practical behavior of multi-cut and two-cut DDP, including their dependence on problem parameters and their performance relative to a direct quadratic constraint reformulation.

 \end{abstract}
 
		{\bf Keywords:}  convex optimization, dual dynamic programming, iteration complexity.\\
		
        {\bf AMS subject classifications:} 90C15, 90C90.\\ 
	 
	\section{Introduction}

This paper considers the following
optimization problem 
\begin{equation}
\label{pb:multistage0Tc}
\begin{alignedat}{2}
E^\star
:=\;& \min_{x_1,\ldots,x_T} 
&\quad& E(x):=\sum_{t=1}^T E_t(x_t) \\
& \quad \text{s.t.}
&& A_tx_t+B_tx_{t-1}=b_t,\qquad t=1,\ldots,T, \\
&&& x_t\in X_t\subseteq \mathbb R^{n_t},\qquad t=1,\ldots,T .
\end{alignedat}
\end{equation}
Here, $x_0 \in \R^{n_0}$ is fixed, $T$ denotes the number of stages, and for every $t=1,\ldots,T$, function $E_t$ is $\alpha_t$-convex with $\alpha_t \geq 0$ and set $X_t$ is nonempty, convex, and compact. 
Let $X=X_1\times\cdots\times X_T$, we say that $T$-tuple
$\bar x=(\bar x_1,\ldots,\bar x_T) \in X$
is a $\bar \varepsilon$-solution of \eqref{pb:multistage0Tc} if
\begin{equation}\label{def:solution}
    E(\bar x)-E^\star\le \bar\varepsilon, \quad \sum_{t=1}^T \|A_t\bar x_t+B_t \bar x_{t-1}-b_t\|^2 \le \bar \varepsilon^2.
\end{equation}

To solve  problem \eqref{pb:multistage0Tc}, one typically exploits its recursive structure via dynamic programming. In particular, Dual Dynamic Programming (DDP) \cite{guiguesejor17,lan2020} is a widely used decomposition method that approximates the cost-to-go functions at each stage by iteratively constructing piecewise-linear lower bounds using dual information. At a high level, DDP alternates between a forward pass, which generates trial decisions along the stages, and a backward pass, which refines the approximations of the future cost functions through cutting planes derived from dual information. This approach allows the large-scale  problem to be solved via a sequence of smaller stagewise problems.

SDDP (Stochastic Dual Dynamic Programming) is an extension of DDP
where $A_t, B_t$, and $b_t$
are random
which was introduced in \cite{per1991} as an extension of the Nested Decomposition
method \cite{birgemulti} to solve multistage stochastic linear programs.
SDDP was analyzed and extended in many references, see for instance
\cite{shapejor,Sha2012a,shadenrbook}.
Convergence analysis of SDDP and some variants
can be found in, e.g., 
\cite{lecphilgirar12,guiguessiopt2016,guigues2016isddp, gms2020isddp}.
The complexity of SDDP was studied in 
\cite{lan2020} for some convex stochastic multistage problems
while \cite{sunsddpc} studies the complexity of
an SDDP-type variant with penalization for multistage stochastic mixed-integer nonlinear optimization.
The complexity of multi-cut
DDP (the deterministic counterpart of SDDP)
to solve problem \eqref{pb:multistage0Tc}
was shown in Theorem 1 of 
\cite{lan2020}.
More precisely, it follows from Theorem~1 of \cite{lan2020}, with $\epsilon=\bar\varepsilon/T^2$
and discount factor $\lambda=1$,
 that  DDP  can compute a $\bar\varepsilon$-solution of
\eqref{pb:multistage0Tc}
in at most
\begin{equation}\label{compconvex}
(T-1)\left(\frac{DT^2}{\bar\varepsilon}+1\right)^n+1
\end{equation}
iterations, where $n = \max_{1 \le t \le T} n_t$ and $D = \max_{1 \le t \le T} D_t$.

A natural question is whether a complexity bound for DDP which is independent of the dimensions  of the 
state vectors can be established.
In this paper,
we answer positively to this question by presenting a complexity 
result independent of $n$.
For the case where the dimension $n$ is large while the number of stages $T$ is small, our complexity improves the complexity bound \eqref{compconvex} of \cite{lan2020}.
It is worth mentioning that both this paper and \cite{lan2020} provide computable termination criteria based on upper and lower bounds on the objective function.

In this paper, we also introduce a framework for DDP, referred to as FDDP, which includes the classical multi-cut DDP method and the two-cut DDP method
proposed in this paper
as special cases. 
Our goal is to establish the complexity of solving  optimization problem \eqref{pb:multistage0Tc} via FDDP,
which is designed to solve an unconstrained problem of the form
\begin{equation}\label{pb:multistage0T}
\min_{(x_1,\ldots,x_T) \in X} \sum_{t=1}^T F_t(x_t,x_{t-1}),
\end{equation}
where $F_t(\cdot,\cdot)$ is convex, $F_t(\cdot,x_{t-1})$ is $\mu_t$-strongly convex for every $x_{t-1} \in X_{t-1}$, and $F_t(x_t,\cdot)$ is $M_{t-1}$-Lipschitz continuous for every $x_t \in X_t$.
It is shown in this paper 
that the complexity, in terms of total forward/backward passes, for finding a $\bar \varepsilon$-solution of \eqref{pb:multistage0T}  is
\begin{equation}\label{eq:FDDP-cmplx}
 {\cal O}\left( \left( \frac{M^2T}{\mu \bar \varepsilon} \right)^{2
^{T-1}-1} \log\frac{1}{ \bar\varepsilon}\right),
\end{equation}
where $\mu = \min_{1 \le t \le T} \mu_t$ and $M = \max_{1 \le t \le T-1} M_t$.

To apply \eqref{eq:FDDP-cmplx} to the constrained problem \eqref{pb:multistage0Tc}, we reformulate it as an unconstrained problem by penalizing the linear coupling constraints with parameter $\rho={\cal O}(1/\bar\varepsilon)$ and adding a small quadratic perturbation with strong convexity $\mu=\Omega(\bar\varepsilon/(TD^2))$. Noting that $M={\cal O}(\rho D)={\cal O}(D/\bar\varepsilon)$ and substituting these estimates into \eqref{eq:FDDP-cmplx} gives $M^2T/(\mu\bar\varepsilon)={\cal O}(T^2D^4/\bar\varepsilon^4)$. Consequently, 
the FDDP complexity for finding a $\bar\varepsilon$-solution of \eqref{pb:multistage0Tc} is
\begin{equation}\label{eq:cmplx}
    {\cal O}\left( \left( \frac{T^2 D^4}{\bar \varepsilon^4} \right)^{2^{T-1}-1} \log\frac{1}{\bar\varepsilon}\right).
\end{equation}
As previously mentioned, this complexity does not depend on the dimensions $n_t$ of state vectors $x_t$.

In the case when problem \eqref{pb:multistage0Tc} is stochastic with random $A_t$,  $B_t,$ and $b_t$, and the objective is replaced by the expectation of the total cost, we believe that our analysis and the tool we
develop in this paper can be used to obtain a complexity
for the SDDP method
or a variant of SDDP
that also does not depend on the dimensions
of the state vectors.
We leave this question as an open
problem for future work.\\

Our contributions are as follows.\\

\par {\textbf{FDDP and Two-cut DDP.}} 
Motivated by a unified framework for various proximal bundle methods \cite{liang2021unified}, which includes multi-cut, two-cut, and single-cut lower approximation models as special instances, we introduce FDDP as a framework by providing a minimum requirement on the update scheme for lower approximation.
To the best of our knowledge, FDDP is the first DDP framework that extends the static unified framework to the dynamic setting. 
More concretely, we present two special instances of FDDP, namely, the classical multi-cut DDP method and a newly developed two-cut DDP method, which uses only two cuts for every approximate cost-to-go function. 
The benefit of studying various DDP methods through the lens of FDDP is that all instances contained in FDDP share a unified complexity analysis.
Beyond multi-cut DDP, several variants with cut selection have been studied in \cite{guiguesejor17, lan2020, bandgui21}, together with their convergence analyses. However, the cut-selection schemes in \cite{guiguesejor17,bandgui21} do not cover the two-cut DDP method considered here.\\

\par {\textbf{Complexity of FDDP.}} We first establish the complexity bound \eqref{eq:FDDP-cmplx} of FDDP for solving 
the unconstrained problem \eqref{pb:multistage0T}. Applying this bound to the original  constrained problem \eqref{pb:multistage0Tc}, we then obtain the complexity \eqref{eq:cmplx}. 
In contrast to \eqref{compconvex} obtained by \cite{lan2020},  \eqref{eq:cmplx} does not depend on the dimensions $n_t$ of the state vectors.
We believe that the analysis and the tool developed in this paper can be used to analyze the complexity of SDDP to obtain a bound 
independent on the dimension of the state vectors, contrary to \cite{lan2020}.\\

\par {\textbf{Numerical experiments.}}  We present the results of numerical
experiments where we
illustrate that DDP can be more efficient than 
a direct solution method to solve some 
strongly convex 
problems of form
\eqref{pb:multistage0Tc}. We also compare the efficiency
of DDP with two cuts only at each iteration with multi-cut DDP and illustrate
our complexity results
on instances, showing on these instances the dependence of FDDP CPU time
on parameters $n$, $T$, and $\mu_t$.\\

The outline of the paper is as follows. 
In Section  \ref{sec:analysis}, we introduce FDDP and present our 
complexity results for FDDP. In Section \ref{companalysis}, we prove the complexity of FDDP. Finally, in Section~\ref{numsim} we illustrate our results with
numerical experiments.

    \subsection{Basic definitions and notation} \label{subsec:notation}
    Let $\R$ denote the set of real numbers.
    Let $ \R_+ $ and $ \R_{++} $ denote the set of non-negative real numbers and the set of positive real numbers, respectively.
	Let $\R^n$ denote the standard $n$-dimensional Euclidean space equipped with  inner product and norm denoted by $\langle \cdot,\cdot\rangle $
	and $\|\cdot\|$, respectively. 
	Let $\log(\cdot)$ denote the natural logarithm.
	
	Let $\Psi: \R^n\rightarrow (-\infty,+\infty]$ be given. Let $\dom \Psi:=\{x \in \R^n: \Psi (x) <\infty\}$ denote the effective domain of $\Psi$.
 We say that
	$\Psi$ is proper if $\dom \Psi \ne \emptyset$.
	A proper function $\Psi: \R^n\rightarrow (-\infty,+\infty]$ is $\mu$-strongly convex for some $\mu \ge 0$ if
	$$
	\Psi\left(\alpha z+(1-\alpha) z'\right)\leq \alpha \Psi(z)+(1-\alpha)\Psi\left(z'\right) - \frac{\alpha(1-\alpha) \mu}{2}\left\|z-z'\right\|^2
	$$
	for every $z, z' \in \dom \Psi$ and $\alpha \in [0,1]$.
 The set of all proper lower semicontinuous $\mu$-strongly convex functions is denoted by $\mConv{n}$.
 When $\mu=0$, we simply denote
    $\mConv{n}$ by $\bConv{n}$.
	For $\varepsilon \ge 0$, the \emph{$\varepsilon$-subdifferential} of $ \Psi $ at $z \in \dom \Psi$ is denoted by

	\begin{equation*}
        \partial_\varepsilon \Psi (z):=\left\{ s \in\R^n: \Psi\left(z'\right)\geq \Psi(z)+\left\langle s,z'-z\right\rangle -\varepsilon, \ \forall \ z'\in\R^n\right\}.
        \end{equation*}
	The subdifferential of $\Psi$ at $z \in \dom \Psi$, denoted by $\partial \Psi (z)$, is by definition the set  $\partial_0 \Psi(z)$. We will denote
 by $\Psi'(x)$ an arbitrary subgradient
 of $\Psi$ at $x$.

Given a closed convex set $X$
and a function $g: X \rightarrow \mathbb{R}$, we denote by
${\cal B}_{X}(g)$ the set of functions
defined on $X$ which are convex, lower semicontinuous, and
which minorize $g$ everywhere on
$X$.

Finally, we denote the ball of center $x_0$ and radius 
$R$ by $B(x_0;R)$.

\section{A DDP framework for a class of strongly convex problems}\label{sec:analysis}

In this section, we propose and study our FDDP framework to solve strongly convex  problems of form 
\eqref{pb:multistage0T}.
Subsection \ref{subsec:assumption} introduces the assumptions on \eqref{pb:multistage0T} and the dynamic programming formulation of \eqref{pb:multistage0T}.
Subsection \ref{sec:fddpdesc}
describes the framework and
states the complexity of FDDP applied to \eqref{pb:multistage0T}.
Complexity of FDDP applied to
problem \eqref{pb:multistage0Tc}
is then given
in Subsection~\ref{sec:constraints}.
Finally, special instances of FDDP
are presented in Subsection \ref{sec:instances}.

\subsection{Problem formulation}\label{subsec:assumption}

For problem \eqref{pb:multistage0T}, we assume the following conditions hold
for every $t=1,\ldots,T$:
\begin{itemize}
\item[\, A0.] $X_0=\{x_0\} \subset \R^{n_0}$ and $X_t \subset \mathbb{R}^{n_t}$ is nonempty, convex, and compact;
    \item[\, A1.] $F_t: \mathbb{R}^{n_t} \times \mathbb{R}^{n_{t-1}} \to \R$ is convex and finite-valued everywhere;
\item[\, A2.] 
there exists $\mu_t>0$ such that $F_t(\cdot,x_{t-1})$ is $\mu_t$-strongly convex for every $x_{t-1} \in X_{t-1}$;
    \item[\, A3.] there exists a scalar $M_{t-1} \ge 0$ such that
    for every $(x_t,x_{t-1}) \in X_t \times X_{t-1}$, we have
\begin{equation}\label{bdsubor}
\partial_{x_{t-1}} F_t(x_t,x_{t-1}) \subset B(0;M_{t-1}).
    \end{equation}
\end{itemize}

The extension of FDDP to solve problem of form \eqref{pb:multistage0Tc} with 
coupling constraints is discussed in Subsection \ref{sec:constraints}.

Assumption A3 implies that, for every $x_t\in X_t$, the function $F_t(x_t,\cdot)$ is $M_{t-1}$-Lipschitz continuous on $X_{t-1}$, i.e.,
\begin{equation} \label{lipschF}
\left| F_t(x_t,x_{t-1}) - F_t(x_t,x_{t-1}') \right| \le M_{t-1}\left\|x_{t-1}-x_{t-1}'\right\|, \quad \forall \ x_{t-1}, x_{t-1}'\in X_{t-1}.
\end{equation}

\noindent For convenience, for $t=1,\ldots,T$, we introduce the functions
\begin{equation}\label{def:Gt}
G_{t-1}(u_t,\ldots,u_T;u_{t-1})
=
\sum_{s=t}^T F_s(u_s,u_{s-1}).
\end{equation}
It clearly follows from 
\eqref{lipschF} and \eqref{def:Gt} that
for every
$u_{\ge t} = (u_t,\ldots,u_T)\in X_t\times\cdots\times X_T$ and
every $x_{t-1}, x'_{t-1} \in X_{t-1}$,
\begin{equation}\label{lipschG}
\left|G_{t-1}(u_{\geq t};x_{t-1})-G_{t-1}(u_{\geq t};x'_{t-1})\right| \stackrel{\eqref{def:Gt}}= \left| F_t(u_t,x_{t-1}) - F_t(u_t,x_{t-1}') \right| \stackrel{\eqref{lipschF}}\leq M_{t-1}\left\|x_{t-1}-x'_{t-1}\right\|.
\end{equation}

Next, let $\Q_T:X_T\to\mathbb{R}$ denote the zero function and, for each $t=1,\ldots,T$, define the
cost-to-go function
$\Q_{t-1}:X_{t-1}\to\mathbb{R}$ by
\begin{equation}\label{secondstodpT}
\Q_{t-1}(x_{t-1})
=
\min_{(u_t,\ldots,u_T)\in X_t\times\cdots\times X_T}
G_{t-1}(u_t,\ldots,u_T;x_{t-1}),
\qquad \forall \ x_{t-1}\in X_{t-1}.
\end{equation}
It follows from definition \eqref{secondstodpT} that for every  $u_{\ge t} = (u_t,\ldots,u_T)\in X_t\times\cdots\times X_T$, we have
\begin{equation}\label{QtleGt}
\Q_{t-1}(x_{t-1})
\le
G_{t-1}\left(u_{\ge t};x_{t-1}\right),
\qquad \forall \ x_{t-1}\in X_{t-1},\quad t=1,\ldots,T.
\end{equation}
Clearly, $\Q_0(x_0)$ is the optimal value of \eqref{pb:multistage0T}. Moreover, by separating the first decision from the remaining tail, we obtain the dynamic programming recursion
\begin{equation}\label{secondstodpaT}
\Q_{t-1}(x_{t-1})
=
\min_{u_t\in X_t}
F_t(u_t,x_{t-1})+\Q_t(u_t),
\qquad \forall \ x_{t-1}\in X_{t-1},\quad t=1,\ldots,T.
\end{equation}
Therefore, an optimal solution of \eqref{pb:multistage0T} can be computed recursively as follows: set $x_0^\star=x_0$ and, for $t=1,\ldots,T$, let $x_t^\star$ be an optimal solution of \eqref{secondstodpaT} with $x_{t-1}=x_{t-1}^\star$. Then $(x_1^\star,\ldots,x_T^\star)$ is an optimal solution of \eqref{pb:multistage0T}. 

Since the functions $\Q_t$ are not available explicitly in practice, the above dynamic programming recursion cannot be implemented directly.
Instead,
the DDP approach considers
approximating 
$\Q_t(\cdot)$ in \eqref{secondstodpT}
by a relatively simple 
lower approximation
$\Gamma_t(\cdot) \in {\cal B}_{X_t}(\Q_t)$ (see Subsection \ref{subsec:notation}).
In the rest of this section, we discuss some properties of $\Q_t$
and the related subproblem obtained by replacing $\Q_t(\cdot)$ by a simple lower approximation $\Gamma_t \in {\cal B}_{X_t}(\Q_t) $, i.e., the one given by
\begin{equation}\label{firststodpa2Tb}
\V_{t-1}\left(x_{t-1};\Gamma_t\right) :=
\min_{u_t
 \in X_t} F_t(u_t,x_{t-1}) +\Gamma_t( u_t )
\end{equation}
for every $x_{t-1} \in X_{t-1}$.

\vspace*{0.5cm}

The following proposition gives some properties of $\V_t(\cdot;\Gamma_{t+1}) $.
In particular, it shows
that this function is
$M_t$-Lipschitz continuous
on $X_t$.

\begin{proposition}\label{lipV}
    For any convex function $\Gamma_{t+1}: \R^{n_{t+1}} \to \R$, the function  $\V_t(\cdot;\Gamma_{t+1})$ satisfies the following statements:
    \begin{itemize}
        \item[
        a)]
        $\V_t(\cdot;\Gamma_{t+1})$ is convex, finite everywhere, and 
        \begin{equation}\label{eq:Vt}
           \emptyset \ne \partial \V_t(x_t;\Gamma_{t+1})\subset B(0;M_t) \quad \forall \ x_t \in X_t; 
        \end{equation}
        moreover,
        \begin{equation}\label{ineq:VtMt}
            \left|\V_t(y_t;\Gamma_{t+1})-\V_t(x_t;\Gamma_{t+1})\right| \leq M_t \|y_t-x_t\| \quad \forall \ x_t, y_t \in X_t;
        \end{equation}
       \item[b)]
       for every $(y_t, x_t) \in X_t \times X_t$ and $s_t \in \partial \V_t(\cdot;\Gamma_{t+1})(x_t)$, we have
       \begin{equation}\label{ineq:Vt-linear}
           0 \le \V_t(y_t;\Gamma_{t+1}) - 
           \left[\V_t(x_t;\Gamma_{t+1})+
\langle s_t,y_t -x_t\rangle \right]
            \le 2 M_t\| y_t- x_t\|.
       \end{equation}
    \end{itemize}
\end{proposition}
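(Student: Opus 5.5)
Note the index shift: $\V_t(x_t;\Gamma_{t+1}) = \min_{u\in X_{t+1}} F_{t+1}(u,x_t)+\Gamma_{t+1}(u)$. By A3 (with $t$ replaced by $t+1$), $\partial_{x_t}F_{t+1}(u,x_t)\subset B(0;M_t)$ for $(u,x_t)\in X_{t+1}\times X_t$.

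\textbf{Convexity and finiteness.} The function $(u,x)\mapsto F_{t+1}(u,x)+\Gamma_{t+1}(u)+\delta_{X_{t+1}}(u)$ is jointly convex. Partial minimization over $u$ of a jointly convex function yields a convex function. For each fixed $x$, $u\mapsto F_{t+1}(u,x)+\Gamma_{t+1}(u)$ is finite and convex, hence continuous, and $X_{t+1}$ is compact. So the minimum is attained and finite for every $x\in\R^{n_t}$, and $\V_t(\cdot;\Gamma_{t+1})$ is a finite convex function on $\R^{n_t}$. In particular its subdifferential is nonempty everywhere, which gives the nonemptiness part of \eqref{eq:Vt}.

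\textbf{Subgradient bound.} Fix $x_t\in X_t$ and a minimizer $u^*\in X_{t+1}$. The standard characterization of subgradients of a marginal function states that $s\in\partial\V_t(x_t;\Gamma_{t+1})$ if and only if $(0,s)\in\partial_{(u,x)}\Phi(u^*,x_t)$, where $\Phi(u,x)=F_{t+1}(u,x)+\Gamma_{t+1}(u)+\delta_{X_{t+1}}(u)$. One can check this directly: $\Phi(u,y)\ge \V_t(y)\ge \V_t(x_t)+\langle s,y-x_t\rangle=\Phi(u^*,x_t)+\langle 0,u-u^*\rangle+\langle s,y-x_t\rangle$. Fixing $u=u^*$ and varying only $y$ then gives $s\in\partial_x\Phi(u^*,\cdot)(x_t)=\partial_{x_t}F_{t+1}(u^*,x_t)$, because $\Gamma_{t+1}$ and $\delta_{X_{t+1}}$ do not depend on $x$. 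A3 then yields $\|s\|\le M_t$. This direct argument avoids needing any sum rule for the joint subdifferential.

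\textbf{Lipschitz bound \eqref{ineq:VtMt}.} For $x_t,y_t\in X_t$, let $u^*$ be optimal at $x_t$. Then
\[
\V_t(y_t)-\V_t(x_t)\le F_{t+1}(u^*,y_t)-F_{t+1}(u^*,x_t)\le M_t\|y_t-x_t\|
\]
by \eqref{lipschF}. Swapping the roles of $x_t$ and $y_t$ gives the absolute value. Alternatively, use the mean value theorem together with the subgradient bound on the convex set $X_t$.

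\textbf{Part b).} The lower bound $0\le\cdot$ is the subgradient inequality. For the upper bound, apply \eqref{ineq:VtMt} to get $\V_t(y_t)-\V_t(x_t)\le M_t\|y_t-x_t\|$, and apply Cauchy--Schwarz with $\|s_t\|\le M_t$ from \eqref{eq:Vt} to get $-\langle s_t,y_t-x_t\rangle\le M_t\|y_t-x_t\|$. Summing the two gives $2M_t\|y_t-x_t\|$.

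The only delicate step is the subgradient bound at points of $X_t$, in particular boundary points. A3 controls $\partial_{x}F_{t+1}$ only on $X_{t+1}\times X_t$, but $\V_t$ is defined on all of $\R^{n_t}$. The direct argument above handles this, since it only uses the partial subdifferential at $(u^*,x_t)\in X_{t+1}\times X_t$.
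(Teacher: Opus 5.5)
Your proof is correct and follows essentially the same route as the paper. The paper gets the subgradient bound from Lemmas~\ref{subgradValueFunc1}--\ref{subgradValueFunc2}, i.e.\ every $s\in\partial\V_t(x_t;\Gamma_{t+1})$ lies in $\partial_{x_t}F_{t+1}(u^*,x_t)$ at a minimizer $u^*$, and then proves part b) with the same triangle/Cauchy--Schwarz argument. The only differences are minor: you check the needed easy direction of the marginal-function subgradient characterization in one line instead of citing Rockafellar's Theorem~24(a), and you obtain \eqref{ineq:VtMt} by comparing values at a common minimizer via \eqref{lipschF} rather than from the subgradient inequality with $\|s\|\le M_t$.
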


\begin{proof}
    a) The first claim \eqref{eq:Vt} immediately follows from the definition of $\V_t(\cdot;\Gamma_{t+1})$ in \eqref{firststodpa2Tb}, condition A3, and Lemma~\ref{subgradValueFunc2} in the Appendix with $F=F_{t+1}$ and $\Gamma = \Gamma_{t+1}$. 
    It thus follows from the convexity of $\V_t(\cdot;\Gamma_{t+1})$ that for every $s_t \in \partial \V_t(y_t;\Gamma_{t+1})$,
    \[
    \V_t(y_t;\Gamma_{t+1})-\V_t(x_t;\Gamma_{t+1}) \le \inner{s_t}{y_t - x_t} \leq M_t \|y_t-x_t\|,
    \]
    where the second inequality is due to \eqref{eq:Vt} and the Cauchy-Schwarz inequality. Hence, the second claim \eqref{ineq:VtMt} follows by switching $x_t$ and $y_t$ in the above inequality. 
    
    b) The first inequality in \eqref{ineq:Vt-linear} follows from  the convexity of $\V_t(\cdot;\Gamma_{t+1})$. Using Cauchy-Schwarz inequality, we then have
    \[
\V_t(y_t;\Gamma_{t+1}) - 
           \left[\V_t(x_t;\Gamma_{t+1})+
\langle s_t,y_t -x_t\rangle \right] \le |\V_t(y_t;\Gamma_{t+1}) - \V_t(x_t;\Gamma_{t+1})| + \|s_t\| \|y_t - x_t\|,
    \]
    which together with \eqref{eq:Vt} and \eqref{ineq:VtMt} implies the second inequality in \eqref{ineq:Vt-linear}.
\end{proof}

\if{
\vspace*{0.5cm}
The following proposition provides elementary properties of functions
$\Q_t$:

\begin{proposition}\label{lem:Qtb}
For every $t=1,\ldots,T$, $\Q_t$ is convex, finite everywhere, and for all $x,y \in X_t$, we have
\begin{equation}\label{lipQ2T}
|\Q_t(x)-\Q_t(y)| \leq 
{M}_t \|x-y\|.
\end{equation}
\end{proposition}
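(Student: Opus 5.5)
The plan is to obtain Proposition~\ref{lem:Qtb} from Proposition~\ref{lipV}, by observing that $\Q_t$ is itself a function of the form $\V_t(\cdot;\Gamma_{t+1})$ with an admissible choice of $\Gamma_{t+1}$. The natural candidate is $\Gamma_{t+1}=\Q_{t+1}$, which turns the recursion \eqref{secondstodpaT} into $\Q_t(x_t)=\V_t(x_t;\Q_{t+1})$ for all $x_t\in X_t$. Proposition~\ref{lipV}a) would then give convexity, finiteness and the Lipschitz bound \eqref{lipQ2T} at once, via \eqref{ineq:VtMt}.

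\textbf{Step 1 (base case).} For $t=T$, $\Q_T\equiv 0$, which is trivially convex, finite and Lipschitz with any constant.

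\textbf{Step 2 (backward induction on $t$).} Assume $\Q_{t+1}$ is convex and finite on $X_{t+1}$. Proposition~\ref{lipV} is stated for convex functions $\Gamma_{t+1}:\R^{n_{t+1}}\to\R$ that are finite everywhere, while $\Q_{t+1}$ is only defined on $X_{t+1}$. I would handle this in one of two ways.
\begin{itemize}
\item[(i)] Extend $\Q_{t+1}$ by its definition \eqref{secondstodpT} to all of $\R^{n_{t+1}}$. Since $F_{t+2}$ is finite everywhere (A1) and $X_{t+2}\times\cdots\times X_T$ is compact, the minimum is attained and finite. Convexity follows because partial minimization of a jointly convex function over a convex set preserves convexity.
\item[(ii)] Alternatively, apply Lemma~\ref{subgradValueFunc2} directly. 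Only $\Gamma$ restricted to $X_{t+1}$ enters the minimization over $u_{t+1}\in X_{t+1}$.
\end{itemize}
Either route lets Proposition~\ref{lipV}a) apply to $\V_t(\cdot;\Q_{t+1})=\Q_t$. Convexity of $\Q_t$ can also be checked directly: $G_t(u_{\ge t+1};x_t)$ is jointly convex in $(u_{\ge t+1},x_t)$ by A1, and its minimum over the convex compact set $X_{t+1}\times\cdots\times X_T$ is convex in $x_t$.

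\textbf{Step 3 (Lipschitz bound, direct check).} As a sanity check that avoids subdifferentials altogether, take a minimizer $u^y_{\ge t+1}$ of $G_t(\cdot;y)$, which exists by compactness and continuity. Then
\[
\Q_t(x)-\Q_t(y)\le G_t(u^y_{\ge t+1};x)-G_t(u^y_{\ge t+1};y)\le M_t\|x-y\|,
\]
where the first inequality uses \eqref{QtleGt} and the second uses \eqref{lipschG}. Swapping $x$ and $y$ gives \eqref{lipQ2T}. I would actually present this argument as the main proof, since it only needs attainment of the minimum. Attainment follows from A0 and A1: the $F_s$ are finite convex functions, hence continuous, and the feasible set is compact. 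The same compactness also yields finiteness of $\Q_t$.

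\textbf{Main obstacle.} The only delicate point is the domain issue: $\Q_{t+1}$ lives on $X_{t+1}$, whereas Proposition~\ref{lipV} is stated for functions on the whole space. The direct argument in Step 3 sidesteps this entirely, which is why I would rely on it.
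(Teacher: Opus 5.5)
Your proof is correct. Your Step 2, option (ii), is essentially the paper's proof, but your main argument (Step 3) takes a different route.

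The paper applies Lemma~\ref{subgradValueFunc2} with $F=F_{t+1}$ and $\Gamma=\Q_{t+1}$. That lemma rests on Lemma~\ref{subgradValueFunc1}, Rockafellar's characterization of subgradients of a value function as partial subgradients $\partial_x G(y_x,x)$ at a minimizer $y_x$. It yields convexity, finiteness and the bound $\partial\Q_t(x)\subset B(0;M_t)$ for $x\in X_t$. The paper then turns this bound into the Lipschitz estimate, exactly as in the proof of \eqref{ineq:VtMt} in Proposition~\ref{lipV}.

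Your Step 3 instead freezes a minimizer $u^y_{\ge t+1}$ of $G_t(\cdot;y)$ and combines \eqref{QtleGt} with \eqref{lipschG}. For convexity you treat $\Q_t$ directly as the partial minimum of the jointly convex $G_t$ over the compact convex set $X_{t+1}\times\cdots\times X_T$.

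Your route is more elementary. It needs only attainment of the minimum and the Lipschitz continuity of $F_{t+1}(u_{t+1},\cdot)$ on $X_t$, with no value-function duality. It also needs no backward induction. The paper's route tacitly requires $\Q_{t+1}$ to be convex and finite on all of $\R^{n_{t+1}}$ to be an admissible $\Gamma$, which is exactly the domain issue you flagged. You also handle $t=T$ separately, whereas the paper's argument would invoke a nonexistent $F_{T+1}$ there, and $M_T$ is not defined by A3.

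What the paper's route gives in addition is the subgradient bound $\partial\Q_t(x)\subset B(0;M_t)$ itself, and consistency with the machinery already used for $\V_t(\cdot;\Gamma_{t+1})$ in Proposition~\ref{lipV}. For the statement as posed, your direct argument is the cleaner of the two.
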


\begin{proof}
    Using the definition of $\Q_t(\cdot)$ in \eqref{secondstodpT}, condition A3, and Lemma~\ref{subgradValueFunc2} in the Appendix with $F=F_{t+1}$ and $\Gamma = \Q_{t+1}$, we have that $\Q_t$ is convex and finite everywhere for every $t=1,\ldots,T$. 
    Inequality \eqref{lipQ2T} follows from an argument similar to the proof of \eqref{ineq:VtMt} in Proposition \ref{lipV}.
\end{proof}

}\fi

\subsection{Framework for DDP with $T$ stages}\label{sec:fddpdesc}

The goal of this subsection is
to describe FDDP 
and to study its complexity. 
FDDP is a framework
to solve problem \eqref{pb:multistage0T}
using Dynamic Programming equations
\eqref{secondstodpaT}.
The complexity of FDDP
is given in Theorem
\ref{prop:null-strong}.

We start describing FDDP. The statement of FDDP uses the following definition.
\begin{definition}\label{defmirror}
    Function $\overline \Gamma_t \in {\cal B}_{X_t}(\Q_t)$ is called a shadow for $\Gamma_t$ in \eqref{firststodpa2Tb} if it satisfies 
    \begin{equation}\label{eq:mirror}
        \overline \Gamma_t(x_t)=\Gamma_t(x_t) ,
    \quad x_t = \underset{u_t \in X_t}\argmin \, F_t(u_t,x_{t-1}) +\overline \Gamma_t( u_t )
    \end{equation}
    where $x_t$ is the optimal solution of \eqref{firststodpa2Tb}.
\end{definition}

An obvious example of a shadow function
for $\Gamma_t$ is 
$\Gamma_t$ itself.  Another example of a shadow function for $\Gamma_t$ will be described in detail in Subsection \ref{sec:instances}.

At iteration $k$, FDDP computes in a forward pass (Step 2)
trial points $x_1^k,\ldots,x_T^k$
which are used in a backward pass
(Step 4) to update the current lower approximations
$\Gamma_1^k$, $\ldots$, $\Gamma_T^k$
of the cost-to-go functions $\Q_1$, $\ldots$, $\Q_{T-1}$.
Different  choices of the shadow functions in Step
4 of FDDP give different instances of the framework, including the classical (multi-cut) DDP which is obtained
by choosing
$\overline \Gamma_t^k=\Gamma_t^k$, $t=1,\ldots,T-1$ (see the third comment after the description of FDDP and Subsection \ref{sec:instances}).

FDDP is given below.\\

\if{
{\bf To do list:}
\begin{itemize}
    \item 
    Fix the shadow examples
    \item Lemma 3.2
    \item
    definition of $z^k[t]$
    \item 
    Lemma 3.6
    \item 
    reorganization of the sections
\item change $z$ to $y$

\end{itemize}
}\fi

\noindent\rule[0.5ex]{1\columnwidth}{1pt}
\par {\textbf{FDDP: Framework for Dual Dynamic Programming.}}\\
\noindent\rule[0.5ex]{1\columnwidth}{1pt}
\par {\textbf{Inputs:}} 
$\bar \varepsilon>0$
and 
$(x^0_1,\ldots,x^0_T) \in X_1 \times \ldots \times X_T$
\par {\textbf{Step 1: (initialization
and computation of the initial approximations $\Gamma^1_1(\cdot),\ldots,\Gamma^1_T(\cdot)$)} 
Set $k=1$, 
$y^0:=(x^0_1,\ldots,x^0_T)$, and
$\Gamma_T^1(\cdot) \equiv 0$.
For  every $t=T-1,\ldots,1$,
 compute the value and a subgradient $s_t^0$ of the function $\V_t(\cdot;\Gamma_{t+1}^{1})$
 defined in \eqref{firststodpa2Tb} at the point $x_t^0$, and set 
\begin{equation}
 \Gamma_t^{1}(\cdot)  = \V_t\left(x_t^0;\Gamma_{t+1}^{1} \right)+
\left\langle s_t^0,\cdot -x_t^0 \right\rangle. \label{ineq:requireg}
\end{equation}
\par {\textbf{Step 2: (computing new trial points)}}
Using $\Gamma_1^k(\cdot),\ldots,\Gamma_{T-1}^k(\cdot)$,
recursively compute from 
$t=1$ to $t=T$ the pair $(m_{t-1}^k,x_t^k)$ given by
\begin{equation}\label{def:vtk}
x^k_t = \underset{u_t \in X_t}\argmin \, F_t\left(u_t,x^k_{t-1}\right) +\Gamma_t^k( u_t ), \quad
m_{t-1}^k := 
 \min_{u_t
 \in X_t} F_t\left(u_t,x^k_{t-1}\right) +\Gamma_t^k( u_t ),
\end{equation}
where $x_0^k=x_0$ and $x_0$ is as in Assumption A0; set 
$x^k=(x_1^k,\ldots,x_T^k)$
and compute $y^k=(y_1^k,\ldots,y_T^k)$ as 
\[
y^k :=
\begin{cases}
y^{k-1}, & \text{if } G_0\left(y^{k-1};x_0\right)\le G_0\left(x^k;x_0\right),\\[2mm]
x^k, & \text{otherwise}.
\end{cases}
\]
\par {\textbf{Step 3:}} {\textbf{(stopping criterion)}}
Compute
\begin{equation}\label{defp0}
p_0^k = G_0\left(y^k;x_0\right) - m_0^k.
\end{equation}
If 
$p_0^k \leq \bar \varepsilon$ then
{\bf stop} and output $\bar \varepsilon$-solution $y^k=(y_1^k,\ldots,y_T^k)$; else
go to Step 4.\\

\par {\textbf{Step 4: (updating the lower approximations $\Gamma_1^k,\ldots,\Gamma_t^k$)}} 
 Set $\Gamma_T^{k+1}(\cdot) \equiv 0$, and
 for every $t=T-1,\ldots,1$,
 let
 $\overline \Gamma_t^k(\cdot)$ be a shadow of $\Gamma_t^k(\cdot)$ for \eqref{def:vtk}, and
 compute the value and a subgradient $s_t^k$ of the function $\V_t(\cdot;\Gamma_{t+1}^{k+1})$ at the point $x_t^k$, and set 
\begin{align}
 \ell_t^{k+1}(\cdot) &= \V_t\left(x_t^k;\Gamma_{t+1}^{k+1}\right)+
\left\langle s_t^k,\cdot -x_t^k\right\rangle,\label{ineq:require1}\\
    \Gamma_t^{k+1}(\cdot) &= \max \left\{ \ell_t^{k+1}(\cdot), 
  \overline \Gamma_t^k(\cdot)\right\}. \label{ineq:require}
\end{align}
Set $k \leftarrow k+1$ and go to Step 2.

\noindent\rule[0.5ex]{1\columnwidth}{1pt}

We now make some comments about FDDP. 

First, it will be shown in Lemma \ref{lemltkv} that the model
$\Gamma_t^{k+1} : X_t \to \R$ computed in \eqref{ineq:require} satisfies $\Gamma_t^{k+1}(\cdot) \le \Q_t(\cdot)$.
Second, it will be shown in  Lemma \ref{lemstopping} that the optimal value
$\Q_0(x_0)$ of problem \eqref{pb:multistage0T} satisfies
$$
m_0^k \leq \Q_0(x_0) \leq G_0\left(y^k;x_0\right)\quad \forall \ k \geq 1,
$$
and hence  the quantity
$p_0^k=G_0(y^k;x_0)-m_0^k$ computed in \eqref{defp0} can be viewed as a primal-dual gap for the primal-dual pair $(y^k,\Gamma_1^k)$.
Hence, if FDDP stops in Step 3 because the condition $p_0^k \le \bar \varepsilon$ is satisfied,
then the solution $y^k$ output by it is a $\bar \varepsilon$-solution of \eqref{pb:multistage0T}.
Third, it follows from Definition~\ref{defmirror} that the shadow function $\overline \Gamma_t^k$ satisfies
\begin{eqnarray}
&&
{\overline \Gamma}_t^k\left(x_t^k\right)=\Gamma_t^k\left(x_t^k\right), \qquad x_t^k = \underset{u_t \in X_t}\argmin \, F_t\left(u_t,x_{t-1}^k\right) +\overline \Gamma_t^k( u_t ).
\label{condgamma10T}
\end{eqnarray}
for every
$t=1,\ldots,T-1$.
Two important ways of choosing the shadow function $\bar \Gamma_t^k$ in Step 4 are discussed in Subsection~\ref{sec:instances}.
The first one yields an approximation $\Gamma_t^{k+1}$ which is the maximum of $k+1$ affine functions while the second one yields
a two-cut approximation $\Gamma_t^{k+1}$, i.e., one consisting of the maximum of two affine functions.

Finally, we discuss how the subgradient $s_t^k$ in Step 4 can be computed.
Clearly, problem \eqref{def:vtk}
with $t$ replaced by $t+1$
is equivalent to   
\begin{equation}\label{pbdefscb}
                \min_{(u_{t+1},z_t)  \in X_{t+1} \times \R^{n_t}} \left\{F_{t+1}(u_{t+1},z_t) +\Gamma_{t+1}^{k+1}( u_{t+1} ) : z_t=x_t^k \right\}
\end{equation}
and the equivalence of statements a) and c) of Lemma \ref{subgradValueFunc1} 
in the Appendix
with $G(u_{t+1},z_t)=F_{t+1}(u_{t+1},z_t) +\Gamma_{t+1}^{k+1}( u_{t+1} )$ implies that any optimal Lagrange multiplier for the constraint $z_t=x_t^k$ is a subgradient of
$\V_t(\cdot;\Gamma_{t+1}^{k+1})$ at
$x_t^k$. Hence, any  primal-dual optimal solution of
\eqref{pbdefscb}
can be used to compute a subgradient of $\V_t(\cdot;\Gamma_{t+1}^{k+1})$ at 
$x_t^k$.

\vspace*{0.2cm}

We now state Theorem \ref{prop:null-strong}
on the complexity of FDDP
to solve problem \eqref{pb:multistage0T}.

    \begin{theorem}\label{prop:null-strong}
    Let Assumptions A0, A1, A2, and A3 hold and define 
    $\overline r$ by
\begin{equation}\label{def:barr}
\overline r := 2\sum_{t=2}^T U_t - \sum_{t=1}^{T-1} L_t,
\end{equation}
where $U_t \ge 0$ is an upper bound
for $F_t$ on the compact set $X_t \times X_{t-1}$
and $L_t < 0$ is a  lower bound for  $\Gamma_t^1$ on the compact set $X_t$.
Let
\begin{equation}\label{defQeps}
\overline Q:=40 \sum_{t=1}^{T-1} \frac{M_t^2 }{\mu_t}.
\end{equation}
    Then the number of iterations performed by
    FDDP until it obtains a $\bar \varepsilon$-solution of \eqref{pb:multistage0T}, i.e., a tuple $(y_1^k,\ldots,y_T^k) \in X_1 \times \ldots \times X_T$
    satisfying $p_0^k \leq \bar \varepsilon$,
    is bounded by
    \begin{equation}\label{ourcfddp}
	     \left(
            1+ \left [ \max\left(2,\frac{\overline Q}{\bar \varepsilon}\right)  \right]^{2^{T-1}-1} \right) \log\left( \frac{2 {\overline r}}{\bar \varepsilon}\right) + 1.
	    \end{equation}
	\end{theorem}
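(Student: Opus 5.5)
The plan is an induction on the number of stages, organized around the stagewise primal-dual gaps produced by the forward pass. For each $t=0,\ldots,T-1$ and iteration $k$, define the stage-$t$ gap
\[
p_t^k := \min_{u_{\ge t+1}} G_t\left(u_{\ge t+1};x_t^k\right) - m_t^k = \Q_t\left(x_t^k\right) - m_t^k \ge 0 .
\]
Nonnegativity follows once I prove (Lemma~\ref{lemltkv}) that $\Gamma_t^k\le\Q_t$. That proof is by backward induction on $t$: $\V_t(\cdot;\Gamma_{t+1}^{k+1})\le \V_t(\cdot;\Q_{t+1})=\Q_t$, the cut $\ell_t^{k+1}$ minorizes $\V_t(\cdot;\Gamma_{t+1}^{k+1})$ by convexity (Proposition~\ref{lipV}), and $\overline\Gamma_t^k\in{\cal B}_{X_t}(\Q_t)$ by definition. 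This also yields $m_0^k\le\Q_0(x_0)\le G_0(y^k;x_0)$ (Lemma~\ref{lemstopping}). I will also show that $\Gamma_t^k(x_t^k)\ge L_t$ and that every relevant gap is bounded by $\overline r$. This uses $U_t\ge F_t$, the monotonicity $\Gamma_t^{k+1}\ge\overline\Gamma_t^k$, and the fact that at the shadow point $\overline\Gamma_t^k=\Gamma_t^k$.

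The core step is a one-stage "null/serious step" recursion of proximal bundle type, in the spirit of \cite{liang2021unified}. Fix $t$ and look at the subproblem defining $m_{t-1}^k$. By \eqref{condgamma10T}, $x_t^k$ also minimizes the $\mu_t$-strongly convex function $F_t(\cdot,x_{t-1}^k)+\overline\Gamma_t^k$. Since $\Gamma_t^{k+1}\ge\overline\Gamma_t^k$, strong convexity gives, for the next model,
\[
F_t(u,x_{t-1}^k)+\Gamma_t^{k+1}(u)\ge m_{t-1}^k+\frac{\mu_t}{2}\|u-x_t^k\|^2 .
\]
Moreover $\Gamma_t^{k+1}\ge\ell_t^{k+1}$, which agrees with $\V_t(\cdot;\Gamma_{t+1}^{k+1})$ at $x_t^k$ and has error at most $2M_t\|u-x_t^k\|$ by \eqref{ineq:Vt-linear}. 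Combining these two facts by minimizing $\max\{\text{quadratic bound},\ \text{linear-with-slope-}2M_t\text{ bound}\}$ over $u$ should yield a contraction of the form
\[
\text{gap}^{k+1}\le \text{gap}^k-\frac{\mu_t}{c\,M_t^2}\min\left\{(\text{gap}^k-\delta)^2,\ \cdot\right\},
\]
or alternatively a geometric decrease whenever the gap dominates $M_t^2/\mu_t$. Here $\delta$ is the error coming from the deeper stages, namely $\Q_t(x_t^k)-\V_t(x_t^k;\Gamma_{t+1}^{k+1})$. I must only require this while $x_{t-1}$ stays fixed; since $x_{t-1}^k$ changes, I will instead run the argument over blocks of iterations along which the upstream trial point is "good". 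This is where $y^k$ and the $\log(2\overline r/\bar\varepsilon)$ factor enter: each phase halves the bound on $p_0$, starting from $\overline r$, giving $\log(2\overline r/\bar\varepsilon)$ phases.

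Next I will do the stage induction, which produces the doubly exponential exponent. Let $N_t(\varepsilon)$ denote the number of iterations needed to drive the gap of stage $t$ below $\varepsilon$. The one-stage bound says that reducing stage-$(t-1)$'s gap by a factor requires about $(Q_t/\varepsilon)$ stage-$t$ "serious" improvements, where $Q_t\sim 40M_t^2/\mu_t$. Each of these in turn needs the stage-$t$ tail error to be of order $\varepsilon^2/Q_t$, by the quadratic relation between the gap and the cut error. Squaring the target accuracy at each level gives exponents $1,\,1+2,\,1+2+4,\ldots$, so with $\overline Q\ge\sum Q_t$ and $\max(2,\overline Q/\bar\varepsilon)$ absorbing constants, the total becomes $\bigl(1+[\max(2,\overline Q/\bar\varepsilon)]^{2^{T-1}-1}\bigr)$ per phase. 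Multiplying by the number of phases and adding $1$ for the final check gives \eqref{ourcfddp}.

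The main obstacle is making the one-stage recursion rigorous when the upstream point $x_{t-1}^k$ moves between iterations and the downstream models $\Gamma_{t+1}^{k+1}$ change simultaneously, since the bundle-method descent argument assumes a fixed prox center. My first attempt would be to prove a direct inequality relating $p_{t-1}^{k+1}$ to $p_{t-1}^k$ and $p_t^{k+1}$ through Lipschitz continuity \eqref{ineq:VtMt}, bounding the drift caused by moving $x_{t-1}$, and then to choose the error thresholds $\varepsilon_t=\varepsilon_{t-1}^2/Q_t$ so that the induction closes.
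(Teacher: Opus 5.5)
\textbf{There is a genuine gap: the step that actually proves the bound is missing.} You have the right ingredients. The shadow makes the stage-$t$ subproblem $\mu_t$-strongly convex around $x_t^k$; $\Gamma_t^{k+1}\ge\max\{\ell_t^{k+1},\overline\Gamma_t^k\}$ with $\ell_t^{k+1}$ accurate up to $2M_t\|\cdot-x_t^k\|$; and the drift in $x_{t-1}$ is Lipschitz. Your accuracy-squaring heuristic also reproduces the exponent $2^{T-1}-1$. But the mechanism you propose does not fit FDDP, which has no prox center and no serious/null test. For $t\ge 2$ the point $x_{t-1}^k$ moves at every iteration, so the gap $\Q_{t-1}(x_{t-1}^k)-m_{t-1}^k$ need not be monotone. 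Any descent inequality for stage $t$ then holds only up to a drift term $M_{t-1}\|x_{t-1}^{k+1}-x_{t-1}^k\|$. There are no blocks ``along which the upstream point is good'' on which to run a halving-phase induction, and you offer no way to control the drift; this is exactly the obstacle you flag and leave open.

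Two further pieces also fail as stated. First, your gap $\Q_0(x_0)-m_0^k$ is only a lower bound for the stopping quantity $p_0^k=G_0(y^k;x_0)-m_0^k$, so you also need control of the primal value of the forward trajectory. Second, the restart bounds your phases would need are not justified by your sketch. For a general shadow, $\Gamma_t^{k+1}\ge\overline\Gamma_t^k$ and $\overline\Gamma_t^k(x_t^k)=\Gamma_t^k(x_t^k)$ only compare $\Gamma_t^{k+1}$ with $\Gamma_t^k$ at $x_t^k$. They do not give $\Gamma_t^k(x_t^k)\ge L_t$, and the hypotheses bound only $\Gamma_t^1$.

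\textbf{The missing idea.} The drift $\Delta_{t-1}^k=\|x_{t-1}^{k+1}-x_{t-1}^k\|$ in the stage-$t$ inequality is precisely what the strong-convexity term $\frac{\mu_{t-1}}{2}(\Delta_{t-1}^k)^2$ of the stage-$(t-1)$ inequality penalizes \emph{at the same iteration}. So the stages must be coupled in one potential rather than handled by induction. The paper uses $\Gamma_t^{k+1}\ge(1-\tau)\ell_t^{k+1}+\tau\overline\Gamma_t^k$, evaluated at the actual next iterate $x_t^{k+1}$ rather than minimized over $u$. It also uses gaps $p_{t-1}^k=G_{t-1}(y^k[t];x_{t-1}^k)-m_{t-1}^k$ built from running-best tails. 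This gives, for every $\tau\in(0,1)$,
\[
p_{t-1}^{k+1}-\tau p_{t-1}^k\le(1-\tau)\left(e_t^{k+1}+2M_t\Delta_t^k\right)-\frac{\tau\mu_t}{2}\left(\Delta_t^k\right)^2+2\tau M_{t-1}\Delta_{t-1}^k .
\]
Here the forward trajectory's own gap satisfies $e_t^{k+1}\le 2\sum_{s=t+1}^{T-1}M_s\Delta_s^k$, by telescoping $m_s^{k+1}-\ell_s^{k+1}(x_s^{k+1})\le 2M_s\Delta_s^k$.

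Set $\varepsilon=\min\{1/2,\bar\varepsilon/\overline Q\}$ and weights $\theta_t=\varepsilon^{2^{T-1}-2^{T-t}}$. Summing with these weights and completing the square in each $\Delta_t^k$ yields the single recursion $r^{k+1}\le\tau r^k+(1-\tau)\bar\varepsilon/2$. Here $r^k=\sum_t\theta_t p_{t-1}^k\ge p_0^k$ and $1/(1-\tau)=1+\varepsilon^{-(2^{T-1}-1)}$. Your squaring heuristic corresponds to the two conditions $(1-\tau)/(\tau\theta_t)\le\varepsilon$ and $\tau\theta_{t+1}^2/((1-\tau)\theta_t)\le\varepsilon$. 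The factor $\log(2\overline r/\bar\varepsilon)$ is then just this linear rate applied once from $r^1\le\overline r$, not a count of halving phases. No bound on the models beyond $k=1$ is needed.
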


We note that, in contrast with 
the complexity result of 
DDP from \cite{lan2020}
given by
\eqref{compconvex}, our complexity bound
\eqref{ourcfddp}
for FDDP does not depend on an upper bound $n$ on the dimensions of the state vectors. Therefore, when $T$ is moderate and $n$ is large, our bound is better than \eqref{compconvex}.
We also observe that
the complexity decreases as the strong convexity constants $\mu_t$ increase and decreases as Lipschitz constants $M_t$
decrease.

\subsection{Complexity of FDDP applied to problem \eqref{pb:multistage0Tc}} \label{sec:constraints}

We now study FDDP applied to constrained  problems of form
\eqref{pb:multistage0Tc}. The argument is to replace the coupling constraints
by a quadratic penalty, add a small strongly convex perturbation, and then apply Theorem~\ref{prop:null-strong}. We show that the resulting point is a $\bar\varepsilon$-solution of the original constrained problem in the sense of
\eqref{def:solution}, namely, it has objective error and coupling-constraint
violation at most $\bar\varepsilon$.

Set $x:=(x_1,\ldots,x_T)$ and $X:=X_1\times\cdots\times X_T$, and define
\begin{equation*}
{\cal A}x-b
:=
\begin{pmatrix}
A_1x_1+B_1x_0-b_1\\
A_2x_2+B_2x_1-b_2\\
\vdots\\
A_Tx_T+B_Tx_{T-1}-b_T
\end{pmatrix}.
\end{equation*}
Then, \eqref{pb:multistage0Tc} is equivalent to 
\begin{equation} \label{pb:constraint-compact}
E^\star
=
\min\{E(x):{\cal A}x=b,\ x\in X\}.
\end{equation}

For $\rho>0$, we consider the penalized problem
\begin{equation}\label{penalized}
E_\rho^\star
=
\min_{(x_1,\ldots,x_T) \in X}
\sum_{t=1}^T
\left[E_t(x_t) + \frac{\rho}{2} \|A_tx_t+B_tx_{t-1}-b_t\|^2\right].
\end{equation}
To recover the stagewise strong convexity required by
Theorem~\ref{prop:null-strong}, we use the perturbed penalized stage functions
\begin{equation}
\label{def:penalized-perturbed-stage}
E_t^{\bar \varepsilon}(x_t,x_{t-1})
:=
E_t(x_t)
+
\frac{\rho}{2}\|A_tx_t+B_tx_{t-1}-b_t\|^2
+
\frac{\bar\varepsilon}{2TD_t^2}
\left\|x_t-x_t^0\right\|^2,
\end{equation}
where, for $t=1,\ldots,T$, $D_t$ is the diameter of $X_t$ and
$x_t^0\in X_t$ is arbitrary. Finally, we apply FDDP to the
perturbed penalized problem 
\begin{equation}
\label{pb:perturbed-penalized}
E_{\rho,\bar \varepsilon}^\star
:=
\min_{(x_1,\ldots,x_T) \in X}
\sum_{t=1}^T
E_t^{\bar \varepsilon}(x_t,x_{t-1}).
\end{equation}
We show that every
$\bar\varepsilon/2$-solution of the perturbed penalized problem
\eqref{pb:perturbed-penalized} is a $\bar\varepsilon$-solution of the penalized
problem \eqref{penalized}. Hence, it follows from Lemma~\ref{LemApp3} in Appendix \ref{App:tech} that a $\bar \varepsilon$-solution of \eqref{penalized} is also a $\bar \varepsilon$-solution to \eqref{pb:multistage0Tc} in the sense of \eqref{def:solution}. As a result, it suffices to find a $\bar\varepsilon/2$-solution of the penalized perturbed problem \eqref{pb:perturbed-penalized} using FDDP.

\begin{theorem}
\label{thm:constraints-complexity}
Let $\bar \varepsilon>0$ be given and $p^\star$ be 
an optimal Lagrange multiplier
for constraint
$\mathcal{A}x=b$ in
\eqref{pb:constraint-compact}. 
For $\rho\ge (4\|p^\star\|+8)/\bar\varepsilon$, FDDP applied to \eqref{pb:perturbed-penalized} computes a $\bar \varepsilon$-solution of \eqref{pb:multistage0Tc} in at most
\begin{equation}
\label{eq:constraints-complexity}
    {\cal O}\left(
\left(
\frac{T^2D^4}{\bar\varepsilon^4}
\right)^{2^{T-1}-1}
\log\left(\frac{1}{\bar\varepsilon}\right)
\right)
\end{equation}
iterations, where $D = \max_{1 \le t \le T} D_t$.
\end{theorem}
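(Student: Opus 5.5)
The plan has three parts. First, check that problem \eqref{pb:perturbed-penalized} satisfies Assumptions A0--A3 with explicit constants. Next, run FDDP with target accuracy $\bar\varepsilon/2$ and apply Theorem~\ref{prop:null-strong}. Finally, transfer the resulting guarantee back to \eqref{pb:multistage0Tc}, using the reduction stated before the theorem together with Lemma~\ref{LemApp3}.

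\textbf{Step 1: Assumptions A0--A3.} Take $F_t:=E_t^{\bar\varepsilon}$. A0 and A1 are immediate, since $E_t$ is convex and finite and the penalty is a convex quadratic in $(x_t,x_{t-1})$. For A2, the term $\frac{\bar\varepsilon}{2TD_t^2}\|x_t-x_t^0\|^2$ gives $\mu_t\ge \bar\varepsilon/(TD_t^2)\ge \bar\varepsilon/(TD^2)$, and the remaining terms are convex in $x_t$. For A3, the only part depending on $x_{t-1}$ is the penalty, whose gradient in $x_{t-1}$ is $\rho B_t^\top(A_tx_t+B_tx_{t-1}-b_t)$. On the compact set $X_t\times X_{t-1}$ its norm is at most $\rho\|B_t\|\,C_t$, where $C_t:=\max\|A_tx_t+B_tx_{t-1}-b_t\|$. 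This gives $M_{t-1}=\mathcal O(\rho)$, i.e.\ $\mathcal O(D/\bar\varepsilon)$ when the data norms are treated as constants. I will be explicit that $\|A_t\|,\|B_t\|,\|b_t\|$ and $\|p^\star\|$ are absorbed into the $\mathcal O$, as in the introduction.

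\textbf{Step 2: iteration count.} With $\rho=(4\|p^\star\|+8)/\bar\varepsilon$ (any larger $\rho$ of the same order works), we get
\[
\overline Q=40\sum_{t=1}^{T-1}\frac{M_t^2}{\mu_t}=\mathcal O\!\left(T\cdot\frac{\rho^2 D^2}{\bar\varepsilon/(TD^2)}\right)=\mathcal O\!\left(\frac{T^2D^4}{\bar\varepsilon^3}\right),
\]
so $\overline Q/(\bar\varepsilon/2)=\mathcal O(T^2D^4/\bar\varepsilon^4)$. The quantity $\overline r$ of \eqref{def:barr} must also be bounded polynomially in $1/\bar\varepsilon$. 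Each $U_t$ is bounded by $\max E_t+\rho C_t^2/2+\bar\varepsilon/(2T)=\mathcal O(1/\bar\varepsilon)$. For $L_t$, note that $\Gamma_t^1$ is an affine cut with slope bounded by $M_t$ (Proposition~\ref{lipV}) and value bounded below by $\min F$, so $L_t\ge -\mathcal O(\rho D)$. Hence $\log(2\overline r/\bar\varepsilon)=\mathcal O(\log(1/\bar\varepsilon))$. Substituting into \eqref{ourcfddp} yields \eqref{eq:constraints-complexity}.

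\textbf{Step 3: correctness.} Let $\hat x$ be the output, so that $\sum_t E_t^{\bar\varepsilon}(\hat x_t,\hat x_{t-1})-E^\star_{\rho,\bar\varepsilon}\le\bar\varepsilon/2$. The perturbation term lies in $[0,\bar\varepsilon/2]$ on $X$ because $\|x_t-x_t^0\|\le D_t$. It follows that the penalized objective at $\hat x$ is at most the perturbed objective at $\hat x$, and that $E^\star_{\rho,\bar\varepsilon}\le E^\star_\rho+\bar\varepsilon/2$. Hence $\hat x$ is a $\bar\varepsilon$-solution of \eqref{penalized}, and Lemma~\ref{LemApp3}, which uses the condition $\rho\ge(4\|p^\star\|+8)/\bar\varepsilon$, converts this into \eqref{def:solution}.

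\textbf{Main obstacle.} I expect the bookkeeping in Step 2 to be the delicate part: getting $M_t$ of order $\rho D$ rather than $\rho D^2$ or worse, and keeping $\overline r$ only polynomial so that it affects just the logarithm. My first attempt would be to bound the residual norm $C_t$ by data constants times $D$ and fold everything else into the $\mathcal O$. Step 3 is routine once Lemma~\ref{LemApp3} is taken as given.
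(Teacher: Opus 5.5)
Your proposal is correct and follows essentially the same route as the paper. You verify A0--A3 with $\mu_t\ge\bar\varepsilon/(TD_t^2)$ and $M_t=\rho\|B_{t+1}\|H_{t+1}=\mathcal O(\rho D)$, apply Theorem~\ref{prop:null-strong} at accuracy $\bar\varepsilon/2$ to get $2\overline Q/\bar\varepsilon=\mathcal O(T^2D^4/\bar\varepsilon^4)$, and transfer back through the perturbation estimate and Lemma~\ref{LemApp3}. Your pointwise comparison of the penalized and perturbed objectives at $\hat x$ is stated more carefully than in the paper.

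Your explicit control of $\overline r$ and your choice $\rho=\Theta(1/\bar\varepsilon)$, both of which the paper leaves implicit, are welcome additions. One small correction: the lower bound on $\Gamma_t^1$ comes from a backward recursion, giving $\min_{X_t}\Gamma_t^1\ge\sum_{s=t+1}^{T}\min F_s-\sum_{s=t}^{T-1}M_sD_s$, which is $-\mathcal O(T\rho D^2)$ rather than $-\mathcal O(\rho D)$. This is still polynomial in $1/\bar\varepsilon$, so it only affects the logarithm.
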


\begin{proof}
Since the perturbation $\bar\varepsilon \|x_t-x_t^0\|^2/(2TD_t^2)$ in
\eqref{def:penalized-perturbed-stage} is nonnegative, we have $E_\rho^\star\le E_{\rho,\bar \varepsilon}^\star$
where 
$E_\rho^\star$ and $E_{\rho,\bar \varepsilon}^\star$ are given by
\eqref{penalized} and \eqref{pb:perturbed-penalized}, respectively.
Moreover, if $x_\rho^\star$ solves \eqref{penalized}, then
\[
E_{\rho,\bar \varepsilon}^\star
\le
\sum_{t=1}^T
E_t^{\bar\varepsilon}
\left(x_{\rho,t}^\star,x_{\rho,t-1}^\star \right)
\stackrel{\eqref{def:penalized-perturbed-stage}}=
E_\rho^\star
+
\sum_{t=1}^T
\frac{\bar\varepsilon}{2TD_t^2}
\left\|x_{\rho,t}^\star-x_t^0 \right\|^2
\le
E_\rho^\star+\frac{\bar\varepsilon}{2}.
\]
Hence, every $\bar\varepsilon/2$-solution of
\eqref{pb:perturbed-penalized} is a $\bar\varepsilon$-solution of
\eqref{penalized}. Lemma~\ref{LemApp3}, applied with
\eqref{probf} replaced by \eqref{pb:constraint-compact}, then yields a $\bar\varepsilon$-solution of the original constrained problem \eqref{pb:constraint-compact}.

Next, we apply Theorem~\ref{prop:null-strong}
to compute a $\bar\varepsilon/2$-solution of \eqref{penalized}
and bound the corresponding number of FDDP iterations. We first record
the constants entering the theorem. For $t=1,\ldots,T$, if
$U_t^F\ge 0$ is an upper bound for $E_t$ on $X_t$, define
\begin{equation}
\label{def:Ht-constraints}
\tilde U_t
:=
U_t^F+\frac{\rho}{2}H_t^2+\frac{\bar\varepsilon}{2T},
\qquad
H_t
:=
\max_{(x_t,x_{t-1})\in X_t\times X_{t-1}}
\|A_tx_t+B_tx_{t-1}-b_t\|.
\end{equation}
Then $\tilde U_t$ is an upper bound for $E_t^{\bar\varepsilon}$ on
$X_t\times X_{t-1}$. Let $\tilde\Gamma_t^1$ be the initial lower approximation
constructed by FDDP for \eqref{pb:perturbed-penalized}, let $\tilde L_t<0$ be a
lower bound for $\tilde\Gamma_t^1$ on $X_t$, and set
\[
\tilde r
:=
2\sum_{t=2}^T \tilde U_t-\sum_{t=1}^{T-1}\tilde L_t .
\]

We now verify the assumptions of Theorem~\ref{prop:null-strong}. Assumption A0
is unchanged. Assumption A1 follows from the convexity and finite-valuedness of
$E_t$ and from the convexity of the added quadratic terms. The strong convexity
modulus of $E_t^{\bar\varepsilon}(\cdot,x_{t-1})$ is
\begin{equation}
\label{def:mut-constraints}
\mu_t
=
\alpha_t
+
\rho\lambda_{\min}(A_t^\top A_t)
+
\frac{\bar\varepsilon}{TD_t^2}
>0,
\end{equation}
so Assumption A2 holds. Finally, by the definitions \eqref{def:penalized-perturbed-stage} and
\eqref{def:Ht-constraints}, we have for $t=1,\ldots,T-1$,
\begin{equation}
\label{def:Mt-constraints}
\partial_{x_t}E_{t+1}^{\bar\varepsilon}(x_{t+1},x_t)
\subseteq B(0; M_t),
\qquad
M_t
:=
\rho\|B_{t+1}\|H_{t+1},
\end{equation}
thus Assumption A3 holds as well.

We are now in a position to apply Theorem~\ref{prop:null-strong} to
\eqref{pb:perturbed-penalized} with accuracy $\bar\varepsilon/2$. This gives iteration complexity
\begin{equation}
\label{eq:exact-complexity-constraints}
\left(
1+
\left[
\max\left\{
2,\frac{2\overline Q}{\bar\varepsilon}
\right\}
\right]^{2^{T-1}-1}
\right)
\log\left(\frac{4\tilde r}{\bar\varepsilon}\right)+1,
\end{equation}
where
\begin{equation}
\label{eq:Qbar-order-constraints}
\frac{2\overline Q}{\bar\varepsilon}
=
\frac{80}{\bar\varepsilon}\sum_{t=1}^{T-1}\frac{M_t^2}{\mu_t}
=
\frac{80}{\bar\varepsilon} \sum_{t=1}^{T-1}
\frac{
\rho^2\|B_{t+1}\|^2H_{t+1}^2
}{\alpha_t
+
\rho\lambda_{\min}(A_t^\top A_t)
+
\frac{\bar\varepsilon}{TD_t^2}
}
=
{\cal O}\left(\frac{T^2D^4}{\bar\varepsilon^4}\right),
\end{equation}
and we used $H_t = {\cal O}(D).$ Finally, \eqref{eq:constraints-complexity} follows from combining \eqref{eq:exact-complexity-constraints} and
\eqref{eq:Qbar-order-constraints}.
\end{proof}

\subsection{Special instances of FDDP}\label{sec:instances}

We now consider two specific implementations of the FDDP framework:
a multi-cut DDP 
that uses $k$ cuts for the approximations at iteration $k$ and 
a two-cut DDP
that uses two cuts for the approximations $\Gamma_t^k$ at every iteration. It suffices to describe $\overline \Gamma_t^k$ chosen in \eqref{ineq:require} to update $\Gamma_t^{k+1}$.

\vgap

\textbf{Multi-cut DDP.}  This special instance of FDDP, referred to as multi-cut DDP, chooses
$\overline \Gamma_t^k = \Gamma_t^k$ 
for every $t = T-1, \ldots, 1$.
Clearly, it follows from Definition \ref{defmirror}, or the remark following it, that $\overline \Gamma_t^k$ is a shadow of $\Gamma_t^k$.
In view of \eqref{ineq:requireg} and \eqref{ineq:require}, it can be easily seen that 
$\Gamma_t^k$ is the maximum of $k$ affine functions. 


\vgap

We will now describe a two-cut instance of FDDP, which we refer to as the two-cut DDP.

\par {\textbf{Two-cut DDP.}}
Before describing how the two-cut DDP generates its sequence of shadows, we first observe that for any $k \ge 2$,
\eqref{ineq:require} with $k$ replaced by $k-1$ implies that problem \eqref{def:vtk} is equivalent to
\begin{equation}\label{eq:vtk}
    m_{t-1}^k
=\min_{u_t \in X_t, \gamma_t \in \R} \left\{F_t\left(u_t,x_{t-1}^k\right) +\gamma_t:
\overline \Gamma_t^{k-1}(u_t) - \gamma_t \le 0, \, \ell_t^k(u_t) - \gamma_t \le 0 \right\}.
\end{equation}
For $t = T-1, \ldots, 1$, the the two-cut DDP instance generates its shadow sequence $\{\bar \Gamma_t^k\}$ as  
\begin{equation}\label{def:Atk}
\overline \Gamma_t^k(\cdot) = \left\{
\begin{array}{ll}
\Gamma_t^1(\cdot),& \mbox{ if }k=1,\\
   \beta_t^k \, \overline \Gamma_t^{k-1}(\cdot) + \left(1-\beta_t^k\right) \, \ell_t^k(\cdot),& \mbox{ if }k \geq 2,
    \end{array}
    \right.
    \end{equation} 
where $\Gamma_t^1$ as in \eqref{ineq:requireg} and $\beta_t^k\in [0,1]$ for $k \ge 2$
is the Lagrangian multiplier for the first constraint 
$\overline \Gamma_t^{k-1}(u_t) - \gamma_t \le 0$ of \eqref{eq:vtk}.
It follows from \eqref{ineq:requireg} and \eqref{def:Atk} that
$\overline \Gamma_t^k$ is an affine function for every $k \ge 1$, and hence that
$\Gamma_t^k$ for $k \ge 2$ is the maximum of two affine functions, because of \eqref{ineq:require}.

The following proposition, whose proof is given in the appendix, shows that the two-cut DDP instance
is a special implementation of FDDP.




\begin{proposition}\label{twocutsp}
    The affine function $\overline \Gamma_t^k$ defined in \eqref{def:Atk} is a shadow of $\Gamma_t^k$ for \eqref{def:vtk}.
\end{proposition}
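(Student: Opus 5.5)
We verify the two requirements of Definition~\ref{defmirror} directly, namely that $\overline \Gamma_t^k \in {\cal B}_{X_t}(\Q_t)$ and that \eqref{condgamma10T} holds. The argument is by induction on $k$. For $k=1$ we have $\overline \Gamma_t^1=\Gamma_t^1$, and a function is trivially a shadow of itself, provided $\Gamma_t^1\le \Q_t$. This minorization follows from $\Gamma_{t+1}^1\le \Q_{t+1}$, which in turn gives $\V_t(\cdot;\Gamma_{t+1}^1)\le \V_t(\cdot;\Q_{t+1})=\Q_t$ by \eqref{secondstodpaT}, together with convexity of $\V_t$ (Proposition~\ref{lipV}). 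Since $\Gamma_T^1\equiv 0 = \Q_T$, this runs as a backward induction on $t$.

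For $k\ge 2$, the key step is a KKT argument for the reformulation \eqref{eq:vtk}. This problem is convex, and Slater's condition holds: pick any $u_t\in X_t$ and $\gamma_t$ large, so both affine constraints are strict. Hence there exist Lagrange multipliers $\beta,\beta'\ge 0$ for the two constraints. Stationarity in the free variable $\gamma_t$ gives $1-\beta-\beta'=0$, so $\beta'=1-\beta_t^k$ with $\beta=\beta_t^k\in[0,1]$. The Lagrangian then collapses to
\[
F_t(u_t,x_{t-1}^k)+\beta_t^k\overline \Gamma_t^{k-1}(u_t)+(1-\beta_t^k)\ell_t^k(u_t)=F_t(u_t,x_{t-1}^k)+\overline\Gamma_t^k(u_t).
\]
By strong duality, the primal optimal pair $(x_t^k,\gamma_t^k)$, where $\gamma_t^k=\Gamma_t^k(x_t^k)=\max\{\overline\Gamma_t^{k-1}(x_t^k),\ell_t^k(x_t^k)\}$, minimizes this Lagrangian over $X_t\times\R$. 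Thus $x_t^k$ minimizes $F_t(\cdot,x_{t-1}^k)+\overline\Gamma_t^k$ over $X_t$, and the minimizer is unique by A2, which gives the second part of \eqref{condgamma10T}.

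For the first part, complementary slackness gives $\beta_t^k(\overline\Gamma_t^{k-1}(x_t^k)-\gamma_t^k)=0$ and $(1-\beta_t^k)(\ell_t^k(x_t^k)-\gamma_t^k)=0$. Adding these yields $\overline\Gamma_t^k(x_t^k)=\gamma_t^k=\Gamma_t^k(x_t^k)$.

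It remains to show $\overline\Gamma_t^k\le \Q_t$ on $X_t$. Since $\overline\Gamma_t^k$ is a convex combination, it suffices to have $\overline\Gamma_t^{k-1}\le \Q_t$, which holds by the induction hypothesis, and $\ell_t^k\le \Q_t$. The latter follows from $\ell_t^k\le \V_t(\cdot;\Gamma_{t+1}^k)$, by the subgradient inequality in \eqref{ineq:Vt-linear}, combined with $\V_t(\cdot;\Gamma_{t+1}^k)\le \V_t(\cdot;\Q_{t+1})=\Q_t$, which needs $\Gamma_{t+1}^k\le\Q_{t+1}$.

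I expect the main obstacle to be organizing this minorization without circularity. I would run a joint induction: outer on $k$, inner backward on $t$ from $T$, where $\Gamma_T^k\equiv 0=\Q_T$. The induction hypothesis is that all $\overline\Gamma_s^{k-1}$ are shadows and $\Gamma_s^k\le\Q_s$ for all $s$. Then $\Gamma_t^{k}=\max\{\ell_t^k,\overline\Gamma_t^{k-1}\}\le\Q_t$ follows as above, and this is consistent with the paper's Lemma~\ref{lemltkv}, which could alternatively be cited. A secondary point is to confirm that the multiplier $\beta_t^k$ in \eqref{def:Atk} is exactly the one given by this KKT system; any optimal multiplier works, and the argument above uses nothing else about it.
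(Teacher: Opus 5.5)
Your proof is correct and follows essentially the same route as the paper's: you obtain the minorization of $\overline\Gamma_t^k$ as a convex combination of $\overline\Gamma_t^{k-1}\le\Q_t$ and $\ell_t^k\le\V_t(\cdot;\Gamma_{t+1}^k)\le\Q_t$, and you obtain the two shadow identities from primal--dual optimality of \eqref{eq:vtk}. The extra steps you add do not change the argument but make rigorous what the paper leaves implicit: Slater's condition, $\beta_t^k\in[0,1]$ from stationarity in $\gamma_t$, the saddle-point characterization over $X_t\times\R$ (the paper's stationarity condition \eqref{eq:optcond} omits the normal cone of $X_t$), complementary slackness, uniqueness via A2, and the explicit joint induction on $(k,t)$.
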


\section{Complexity analysis of FDDP}\label{companalysis}

The main objective of this section is to prove Theorem \ref{prop:null-strong}, which establishes the iteration complexity of FDDP for obtaining a $\bar\varepsilon$-optimal solution of \eqref{pb:multistage0T}. We first derive a number of auxiliary results for the sequences generated by the algorithm.

In what follows, for every $t \in \{0,\ldots,T\}$ and $k \geq 1$, let
\begin{equation}\label{Deltak}
\Delta_t^k := \left\|x_t^{k+1}-x_t^k\right\|.
\end{equation}
Since $x_0^k=x_0$ for all $k \geq 1$ (see Step~2 of FDDP), we have
\begin{equation}\label{delta0eq0}
\Delta_0^k=0
\end{equation}
for all $k \geq 1$.

Lemma~\ref{lemltkv} summarizes the main properties of $\V_t(\cdot;\Gamma_{t+1}^{k+1})$, $\ell_t^{k+1}$, and $\Gamma_t^k$ used in what follows. In particular,  it shows that $\ell_t^{k+1}$ and $\Gamma_t^k$ are convex lower approximations of $\Q_t$ on $X_t$.

\begin{lemma}\label{lemltkv}For every $k\geq 1$, the following statements hold:
\begin{itemize}
\item[(i)] $\V_t(\cdot;\Gamma_{t+1}^{k+1})$ is convex and  $M_t$-Lipschitz continuous;
\item[(ii)] 
for every $u_t \in X_t$, 
we have
\beq \label{eq:sim-cond1}
\V_t\left(u_t;\Gamma^{k+1}_{t+1}\right) -  \ell_t^{k+1}(u_t) \le 2 M_t \left\|u_t-x_t^k\right\|,
\eeq
and
\begin{equation}\label{eq:sim-cond2b}
\Q_t(u_t) \geq
\V_t\left(u_t;\Gamma_{t+1}^{k+1}\right) \geq 
\ell_t^{k+1}(u_t),
\end{equation}
where $\V_t(\cdot;\cdot)$ and $\ell_t^{k+1}(\cdot)$ are defined in
\eqref{firststodpa2Tb} and \eqref{ineq:require1}, respectively.
As a consequence, we have
\beq \label{eq:sim-cond1'} m_t^{k+1} -
\ell_t^{k+1}\left(x_t^{k+1}\right) 
\le 2 M_t \Delta_t^k,
\eeq
where $\Delta_t^k$ is given by \eqref{Deltak}
and 
\begin{equation}\label{eq:sim-cond2b-xt}
\Q_t\left(x_t^{k+1}\right) \geq
m_t^{k+1} \geq 
\ell_t^{k+1}\left(x_t^{k+1}\right);
\end{equation}
\item[(iii)] for all $t=1,\ldots,T$, and all $k \geq 1$, we have that 
$\Gamma_t^k$ is convex
and satisfies
\begin{equation}\label{gammaleQ}
\Gamma_t^k \leq \mathcal{Q}_t.
\end{equation}
\end{itemize}
 \end{lemma}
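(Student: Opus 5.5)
The plan is a backward induction on $t$ (from $T$ down to $1$) nested with an induction on $k$. The induction hypothesis at stage $t+1$ is $\Gamma_{t+1}^{k+1}\le \Q_{t+1}$ together with convexity; the base case is $\Gamma_T^{k}\equiv 0=\Q_T$. Before the induction, I record the monotonicity of the value map: for any convex $\Gamma\le\Gamma'$ we have $\V_t(\cdot;\Gamma)\le \V_t(\cdot;\Gamma')$. Also $\V_t(\cdot;\Q_{t+1})=\Q_t$ by the recursion \eqref{secondstodpaT}.

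Item (i) is Proposition \ref{lipV}(a) applied with $\Gamma_{t+1}=\Gamma_{t+1}^{k+1}$. That function is convex and finite: it is either identically zero, or a maximum of an affine function and a shadow. In the latter case convexity comes from the induction hypothesis (iii) at stage $t+1$, with the shadow convex by definition of ${\cal B}_{X_t}(\Q_t)$. The appendix lemma needs $\Gamma$ finite on $\R^{n_{t+1}}$. Affine cuts are finite everywhere, and in the two instances the shadows are affine or maxima of affine functions. For a general shadow I would extend it by its values on $X_{t+1}$, or simply note that the minimization in \eqref{firststodpa2Tb} only uses $X_{t+1}$.

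For item (ii), \eqref{eq:sim-cond1} is exactly the right inequality of \eqref{ineq:Vt-linear} in Proposition \ref{lipV}(b), with $x_t=x_t^k$ and $s_t=s_t^k$, because $\ell_t^{k+1}$ is the linearization of $\V_t(\cdot;\Gamma_{t+1}^{k+1})$ at $x_t^k$. The same proposition, via the left inequality of \eqref{ineq:Vt-linear}, gives $\V_t(\cdot;\Gamma_{t+1}^{k+1})\ge\ell_t^{k+1}$. The remaining inequality $\Q_t\ge \V_t(\cdot;\Gamma_{t+1}^{k+1})$ follows from monotonicity and $\Gamma_{t+1}^{k+1}\le\Q_{t+1}$, which is the induction hypothesis at stage $t+1$. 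The consequences follow by evaluating at $u_t=x_t^{k+1}$. By \eqref{def:vtk} at iteration $k+1$, with index $t+1$, we have $m_t^{k+1}=\V_t(x_t^{k+1};\Gamma_{t+1}^{k+1})$. Hence \eqref{eq:sim-cond1} gives \eqref{eq:sim-cond1'} with $\|x_t^{k+1}-x_t^k\|=\Delta_t^k$, and \eqref{eq:sim-cond2b} gives \eqref{eq:sim-cond2b-xt}.

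For item (iii), the case $k=1$ uses \eqref{ineq:requireg}. There $\Gamma_t^1$ is the linearization of $\V_t(\cdot;\Gamma_{t+1}^1)$ at $x_t^0$, hence affine and a minorant of $\V_t(\cdot;\Gamma_{t+1}^1)$. This in turn is at most $\Q_t$ by monotonicity and the stage-$(t+1)$ hypothesis $\Gamma_{t+1}^1\le\Q_{t+1}$. For $k\ge1$, $\Gamma_t^{k+1}=\max\{\ell_t^{k+1},\overline\Gamma_t^k\}$. Here $\ell_t^{k+1}\le\Q_t$ by (ii), and $\overline\Gamma_t^k\le \Q_t$ and is convex since $\overline\Gamma_t^k\in{\cal B}_{X_t}(\Q_t)$ by Definition \ref{defmirror}. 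A maximum of convex minorants is a convex minorant, which closes the induction. The main subtlety is ordering the double induction correctly. At iteration $k+1$, stage $t$ requires $\Gamma_{t+1}^{k+1}$, which Step 4 constructs earlier in the backward pass. So within a fixed $k$ I proceed from $t=T$ downward, and (ii) at stage $t$ is proved after (iii) at stage $t+1$ for the same index $k+1$.
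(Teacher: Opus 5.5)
Your proof is correct and follows the paper's argument: (i) and (ii) come from Proposition~\ref{lipV} applied to $\Gamma_{t+1}^{k+1}$, together with the comparison $\V_t(\cdot;\Gamma_{t+1}^{k+1})\le\Q_t$; (iii) comes from taking the maximum of the cut $\ell_t^{k+1}$ and the shadow $\overline\Gamma_t^k$. You are more careful than the paper, which uses $\Gamma_{t+1}^{k+1}\le\Q_{t+1}$ in (ii) without making the backward induction on $t$ explicit and never treats $\Gamma_t^1$ from \eqref{ineq:requireg}; the outer induction on $k$ is harmless but unnecessary, since the shadow property already gives $\overline\Gamma_t^k\le\Q_t$.
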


\begin{proof}
(i)  This statement follows directly from Proposition~\ref{lipV}(a) applied to $\Gamma_{t+1}^{k+1}$.

ii) By definition, $s_t^k\in \partial \V_t(x_t^k;\Gamma_{t+1}^{k+1})$ and
$\ell_t^{k+1}(\cdot)=\V_t(x_t^k;\Gamma_{t+1}^{k+1})+\langle s_t^k,\cdot-x_t^k\rangle$.
Proposition~\ref{lipV}(b) gives for every $u_t\in X_t$,
\[
0\le \V_t\left(u_t;\Gamma_{t+1}^{k+1}\right)-\ell_t^{k+1}(u_t)\le 2M_t\left\|u_t-x_t^k\right\|,
\]
which implies \eqref{eq:sim-cond1} and $\V_t(u_t;\Gamma_{t+1}^{k+1})\ge \ell_t^{k+1}(u_t)$.
Moreover, since $\Gamma_{t+1}^{k+1}\le \Q_{t+1}$ on $X_{t+1}$, we have for all $u_t\in X_t$,
\begin{align*}
    \V_t\left(u_t;\Gamma_{t+1}^{k+1}\right)
&=\min_{u_{t+1}\in X_{t+1}}\left(F_{t+1}(u_{t+1},u_t)+\Gamma_{t+1}^{k+1}(u_{t+1})\right)
\\&\le
\min_{u_{t+1}\in X_{t+1}}\left(F_{t+1}(u_{t+1},u_t)+\Q_{t+1}(u_{t+1})\right)
\\&=\Q_t(u_t),
\end{align*} 
using \eqref{secondstodpT} in the last equality. This proves \eqref{eq:sim-cond2b}.
Taking $u_t=x_t^{k+1}$ in \eqref{eq:sim-cond1} and using $m_t^{k+1}=\V_t(x_t^{k+1};\Gamma_{t+1}^{k+1})$
gives \eqref{eq:sim-cond1'}. Taking $u_t=x_t^{k+1}$ in
\eqref{eq:sim-cond2b} gives \eqref{eq:sim-cond2b-xt}.

(iii) By (i), for
$t=1,\ldots,T-1$, 
$\V_t(\cdot;\Gamma_{t+1}^{k+1})$
is convex and
given $s_t^k \in \partial \V_t(x_t^k;\Gamma_{t+1}^{k+1})$, we can compute its linearization 
$$
 \ell_t^{k+1}(\cdot) = \ell_t\left(\cdot;x_t^k,s_t^k,\Gamma_{t+1}^{k+1}\right):=\V_t\left(x_t^k;\Gamma_{t+1}^{k+1}\right)+
\left\langle s_t^k,\cdot -x_t^k \right\rangle 
$$ which is below the functions
$\V_t(\cdot;\Gamma_{t+1}^{k+1})$
and $\Q_t$ on $X_t$. 
Moreover, by definition of a shadow, $\overline \Gamma_t^k$
is also below $\Q_t$ on 
$X_t$ which implies,
using definition \eqref{ineq:require}
of $\Gamma_t^{k+1}$, that
$\Gamma_t^{k+1}$ is convex and
satisfies \eqref{gammaleQ}.
\end{proof}

The next result confirms that the stopping criterion in Step~3 of FDDP is well defined.

\begin{lemma}\label{lemstopping} For every $k \geq 1$, the following inequalities hold: 
\begin{align}
&\Q_0(x_0) \le G_0\left(y^k;x_0\right), \nn \\
& m_{t-1}^k \le \Q_{t-1}\left(x_{t-1}^k\right), \quad \forall \ t \ge 1. \label{mtleQt}
\end{align}
As a consequence, 
the optimal value $\Q_0(x_0)$ of \eqref{pb:multistage0T} lies in the interval $[m_0^k, G_0(y^k;x_0)] $. Moreover, if 
$p_0^k$ computed in step 3 of FDDP satisfies
$p_0^k \le \bar \varepsilon$, then $y^k$ is a $\bar \varepsilon$-solution of \eqref{pb:multistage0T}.
\end{lemma}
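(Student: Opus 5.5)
The first inequality, $\Q_0(x_0) \le G_0(y^k;x_0)$, follows from \eqref{QtleGt} with $t=1$ and $u_{\ge 1}=y^k$. This is legitimate because $y^k \in X_1\times\cdots\times X_T$: by Step 2 of FDDP, $y^k$ equals either $y^0=(x_1^0,\ldots,x_T^0)$ or some forward-pass iterate $x^j$. The input $y^0$ lies in $X$ by assumption, and each $x_t^j$ is an argmin over $X_t$ in \eqref{def:vtk}. A simple induction on $k$ therefore shows $y^k\in X$.

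For \eqref{mtleQt}, I fix $k\ge1$ and $t\ge1$ and compare the two minimization problems at the same point $x_{t-1}^k\in X_{t-1}$. Here $x_0^k=x_0$, and for $t\ge2$ we have $x_{t-1}^k\in X_{t-1}$ because it is an argmin over $X_{t-1}$. Lemma~\ref{lemltkv}(iii) gives $\Gamma_t^k\le \Q_t$ on $X_t$ for $t\le T-1$; for $t=T$ both sides are zero, since $\Gamma_T^k\equiv0\equiv\Q_T$. One point needs checking: Lemma~\ref{lemltkv}(iii) as proved covers $\Gamma_t^{k+1}$ built in Step 4, so for $k=1$ I also need $\Gamma_t^1\le\Q_t$. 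This holds by a backward induction on $t$ using \eqref{ineq:requireg}. Since $\Gamma_t^1$ is a linearization of the convex function $\V_t(\cdot;\Gamma_{t+1}^1)$, it minorizes that function. In turn $\V_t(\cdot;\Gamma_{t+1}^1)\le \Q_t$ because $\Gamma_{t+1}^1\le\Q_{t+1}$, exactly as in the argument for \eqref{eq:sim-cond2b}.

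With the minorization in hand, for every $u_t\in X_t$ we have $F_t(u_t,x_{t-1}^k)+\Gamma_t^k(u_t)\le F_t(u_t,x_{t-1}^k)+\Q_t(u_t)$. Taking the minimum over $u_t\in X_t$ and using \eqref{def:vtk} on the left and the recursion \eqref{secondstodpaT} on the right yields $m_{t-1}^k\le \Q_{t-1}(x_{t-1}^k)$.

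The consequences then follow directly. Taking $t=1$ and $x_0^k=x_0$ gives $m_0^k\le\Q_0(x_0)\le G_0(y^k;x_0)$, so $\Q_0(x_0)$ lies in the stated interval. If $p_0^k\le\bar\varepsilon$, then $G_0(y^k;x_0)-\Q_0(x_0)\le G_0(y^k;x_0)-m_0^k=p_0^k\le\bar\varepsilon$, and since $y^k\in X$ and $\Q_0(x_0)$ is the optimal value of \eqref{pb:multistage0T}, $y^k$ is a $\bar\varepsilon$-solution. The only mildly delicate step is the $k=1$ base case of the minorization $\Gamma_t^k\le \Q_t$, which I handle by the backward induction on $t$ above.
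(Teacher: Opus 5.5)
Your proof is correct and follows the paper's argument. You minorize $\Gamma_t^k\le\Q_t$ via Lemma~\ref{lemltkv}(iii), take the minimum over $X_t$, apply the recursion \eqref{secondstodpaT}, and get the upper bound from \eqref{QtleGt}. Your explicit checks that $y^k\in X$ and that $\Gamma_t^1\le\Q_t$ (by backward induction on $t$ using \eqref{ineq:requireg}) supply details the paper leaves implicit, since its proof of Lemma~\ref{lemltkv}(iii) only treats $\Gamma_t^{k+1}$.
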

\begin{proof} 
For every $t\in\{1,\ldots,T\}$,
\begin{equation*}
m_{t-1}^k
\stackrel{\eqref{def:vtk}}{=}
\min_{u_t\in X_t}\Big\{F_t\left(u_t,x_{t-1}^k\right)+\Gamma_t^k(u_t)\Big\}
\stackrel{\eqref{gammaleQ}}{\le}
\min_{u_t\in X_t}\Big\{F_t\left(u_t,x_{t-1}^k\right)+\Q_t(u_t)\Big\}
\stackrel{\eqref{secondstodpT}}{=}
\Q_{t-1}\left(x_{t-1}^k\right).
\end{equation*}
Moreover, we have $
\Q_0(x_0)
\le
G_0(y^k;x_0)$ by \eqref{QtleGt}.
\end{proof}

For notational simplicity, for fixed $k \geq 1$ and $t \in \{1,\ldots,T-1\}$, we write
\begin{align}
\left(x_t,x_t^+\right) &:= \left(x_t^k,x_t^{k+1}\right), 
\qquad
\left(m_t,m_t^+\right) := \left(m_t^k,m_t^{k+1}\right), 
\qquad
\overline{\Gamma}_t(\cdot) := \overline{\Gamma}_t^k(\cdot), \label{shortnot1}\\
\Gamma_t^+(\cdot) &:= \Gamma_t^{k+1}(\cdot),
\qquad
\ell_t^+(\cdot) := \ell_t^{k+1}(\cdot),
\qquad
\Delta_t := \Delta_t^k. \label{shortnot2}
\end{align}

The next result gives a recursive inequality for $\{m_t^k\}$.
\begin{lemma}\label{lemfirstq0m}
For every $k\geq 1$,
$t \in \{1,\ldots,T-1\}$, and $\tau \in [0,1]$, we have
\begin{equation}\label{ineqlemma}
    m_{t-1}^{k+1}
-\tau m_{t-1}^k
\ge (1-\tau) \left [
 F_t\left(x_t^{k+1},x_{t-1}^{k+1}\right) + m_t^{k+1}
 -2{M}_t \Delta_t^k \right] 
+\tau\left[ \frac{\mu_t}{2}\left(\Delta_t^k\right)^2 - 
M_{t-1}\Delta_{t-1}^k \right],
\end{equation}
where $m_{t-1}^k$
is as in \eqref{def:vtk}, and
$\Delta_t^k$
is as in \eqref{Deltak}, with $\Delta_0^k=0$ by \eqref{delta0eq0}.
\end{lemma}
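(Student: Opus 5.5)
The plan is to write $m_{t-1}^{k+1}$ as the value of its defining subproblem at the new trial point, lower bound the max-type model $\Gamma_t^{k+1}$ by a convex combination of its two pieces, and treat each piece separately. By \eqref{def:vtk} with $k$ replaced by $k+1$, we have
\[
m_{t-1}^{k+1}=F_t\left(x_t^{k+1},x_{t-1}^{k+1}\right)+\Gamma_t^{k+1}\left(x_t^{k+1}\right).
\]
By \eqref{ineq:require}, $\Gamma_t^{k+1}=\max\{\ell_t^{k+1},\overline\Gamma_t^k\}\ge (1-\tau)\ell_t^{k+1}+\tau\,\overline\Gamma_t^k$ for any $\tau\in[0,1]$. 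Substituting this and splitting $F_t(x_t^{k+1},x_{t-1}^{k+1})$ with weights $1-\tau$ and $\tau$ gives
\[
m_{t-1}^{k+1}\ge (1-\tau)\left[F_t\left(x_t^{k+1},x_{t-1}^{k+1}\right)+\ell_t^{k+1}\left(x_t^{k+1}\right)\right]+\tau\left[F_t\left(x_t^{k+1},x_{t-1}^{k+1}\right)+\overline\Gamma_t^k\left(x_t^{k+1}\right)\right].
\]

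For the first bracket, I would invoke \eqref{eq:sim-cond1'} of Lemma~\ref{lemltkv}, which gives $\ell_t^{k+1}(x_t^{k+1})\ge m_t^{k+1}-2M_t\Delta_t^k$. This yields exactly the $(1-\tau)$ term in \eqref{ineqlemma}.

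For the second bracket, the key is the shadow property \eqref{condgamma10T}. Define $\phi(u):=F_t(u,x_{t-1}^k)+\overline\Gamma_t^k(u)$ on $X_t$.
\begin{itemize}
\item Since $\overline\Gamma_t^k\in{\cal B}_{X_t}(\Q_t)$ is convex and $F_t(\cdot,x_{t-1}^k)$ is $\mu_t$-strongly convex by A2, $\phi$ is $\mu_t$-strongly convex.
\item By \eqref{condgamma10T}, $x_t^k$ minimizes $\phi$ over $X_t$.
\item Again by \eqref{condgamma10T}, $\phi(x_t^k)=F_t(x_t^k,x_{t-1}^k)+\Gamma_t^k(x_t^k)=m_{t-1}^k$.
\end{itemize}
The standard growth property of a strongly convex function around its constrained minimizer (first-order optimality plus strong convexity) then gives
\[
\phi\left(x_t^{k+1}\right)\ge m_{t-1}^k+\frac{\mu_t}{2}\left(\Delta_t^k\right)^2 .
\]

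It remains to pass from $x_{t-1}^k$ to $x_{t-1}^{k+1}$ inside $F_t$. The Lipschitz bound \eqref{lipschF} gives
\[
F_t\left(x_t^{k+1},x_{t-1}^{k+1}\right)\ge F_t\left(x_t^{k+1},x_{t-1}^k\right)-M_{t-1}\Delta_{t-1}^k.
\]
For $t=1$ this correction vanishes by \eqref{delta0eq0}. Hence the second bracket is at least $m_{t-1}^k+\frac{\mu_t}{2}(\Delta_t^k)^2-M_{t-1}\Delta_{t-1}^k$. Multiplying by $\tau$ and moving $\tau m_{t-1}^k$ to the left-hand side gives \eqref{ineqlemma}.

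The only delicate step is the strong-convexity growth bound. It requires that $\overline\Gamma_t^k$ be convex on $X_t$, which holds by membership in ${\cal B}_{X_t}(\Q_t)$. It also requires that the minimizer of $\phi$ be exactly $x_t^k$ with value $m_{t-1}^k$, and both facts are precisely what Definition~\ref{defmirror} provides. The rest is bookkeeping.
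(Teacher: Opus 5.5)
Your proposal is correct and follows essentially the same route as the paper's proof. You lower-bound $\Gamma_t^{k+1}$ by the convex combination $(1-\tau)\ell_t^{k+1}+\tau\overline\Gamma_t^k$ and use the shadow property \eqref{condgamma10T} with $\mu_t$-strong convexity to get the quadratic growth term. You then apply the Lipschitz bound \eqref{lipschF} to pass from $x_{t-1}^k$ to $x_{t-1}^{k+1}$, and \eqref{eq:sim-cond1'} for the $(1-\tau)$ part; the paper does the same, only deriving the strong-convexity bound before the convex-combination step.
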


\begin{proof} Using notation \eqref{shortnot1}-\eqref{shortnot2}, inequality \eqref{ineqlemma} then becomes
\begin{equation}\label{ineq:goal}
    m_{t-1}^+ -\tau m_{t-1} \ge (1-\tau) \left [
 F_t\left(x_t^+,x_{t-1}^+\right) + m_t^+
 -2{M}_t \Delta_t \right] 
+\tau\left[ \frac{\mu_t}{2}(\Delta_t)^2 - 
M_{t-1}\Delta_{t-1} \right].
\end{equation}
Since $x_t$ is the optimal solution of \eqref{condgamma10T}, whose objective function is $\mu_t$-strongly convex in view of Assumption A2, we have for every $u_t \in X_t$,
\begin{equation}\label{sconvstep00b}
F_t(u_t,x_{t-1}) + \overline \Gamma_t(u_t) \geq F_t(x_t,x_{t-1}) + \overline \Gamma_t(x_t)+\frac{\mu_t}{2}\|u_t-x_t\|^2.
\end{equation}
The above inequality with $u_t=x_t^+$, the definition of $\Delta_t$ in \eqref{Deltak}, and the identities in
\eqref{def:vtk} and \eqref{condgamma10T}, imply that
\begin{align}
F_t\left(x_t^+,x_{t-1}\right) + \overline \Gamma_t\left(x_t^+\right) & \stackrel{\eqref{Deltak},\eqref{sconvstep00b}}{\geq}   F_t(x_t,x_{t-1}) + \overline \Gamma_t(x_t)+\frac{\mu_t}{2}(\Delta_t)^2 \nn \\
 & \stackrel{\ \ \, \eqref{condgamma10T}\ \ \, }{=} F_t(x_t,x_{t-1}) + \Gamma_t(x_t)+\frac{\mu_t}{2}(\Delta_t)^2
 \stackrel{\eqref{def:vtk}}  = \m_{t-1}+\frac{\mu_t}{2} (\Delta_t)^2.\label{sconvstep0b}
\end{align}
It follows from \eqref{def:vtk} and \eqref{ineq:require} that for every $\tau \in [0,1]$,
\begin{align*}
m_{t-1}^+ 
&\stackrel{\eqref{def:vtk}}=  F_t\left(x_t^+,x_{t-1}^+\right)  + \Gamma_t^+ \left(x_t^+\right)
\stackrel{\eqref{ineq:require}}\geq F_t\left(x_t^+,x_{t-1}^+\right)+ (1-\tau)
\ell_t^+\left(x_t^+\right)+
\tau {\overline \Gamma}_t\left(x_t^+\right)\\
& \ = \, (1-\tau)\left[F_t\left(x_t^+,x_{t-1}^+\right) + 
 \ell_t^+\left(x_t^+\right) \right]+
\tau \left[F_t\left(x_t^+,x_{t-1}^+\right)+
{\overline \Gamma}_t\left(x_t^+\right) \right]\\
& \ \geq \, (1-\tau)\left[F_t\left(x_t^+,x_{t-1}^+\right)+\ell_t^+\left(x_t^+\right)\right]+
\tau \left[
F_t\left(x_t^+,x_{t-1}\right) + {\overline \Gamma}_t\left(x_t^+\right) -{M}_{t-1} \Delta_{t-1}\right],
\end{align*}
where the last inequality is due to the fact that $F_t(x_t^+,\cdot)$ is $M_{t-1}$-Lipschitz continuous.
Plugging \eqref{sconvstep0b} into the above inequality, we obtain
\[
m_{t-1}^+ \stackrel{\eqref{sconvstep0b}}{\geq} (1-\tau)\left[F_t\left(x_t^+,x_{t-1}^+\right)+\ell_t^+\left(x_t^+\right)\right]+
\tau \left[
\m_{t-1} + \frac{\mu_t}{2}(\Delta_t)^2 -M_{t-1} \Delta_{t-1}\right].
\]
Finally, this inequality and \eqref{eq:sim-cond1'} imply that \eqref{ineq:goal} holds.
\end{proof}

Recall from step 3 of FDDP that the algorithm
stops when 
$p_0^k \leq \bar \varepsilon$,
where $p_0^k$ is defined
in  \eqref{defp0}.
To analyze \(p_0^k\), we introduce for each stage \(t=2,\ldots,T-1\), a quantity \(p_{t-1}^k\) that plays  the same role for stage \(t\) as \(p_0^k\) does for the first stage. More specifically, for every 
$t \in \{1,\ldots,T-1\}$, first  define
$y^k[t]=(y_t^k[t],\ldots,y_T^k[t]) \in X_t \times \ldots \times X_T$ for every $k \ge 1$ as
\begin{equation}\label{initinduc2}
y^{k}[t]=
\left\{
\begin{array}{ll}
y^{k-1}[t],& \mbox{if }G_{t-1}\left(y^{k-1}[t];x_{t-1}^{k}\right) \leq G_{t-1}\left(x_{\ge t}^{k};x_{t-1}^{k}\right),\\
x_{\ge t}^{k}, & \mbox{otherwise,}
\end{array}
\right.
\end{equation}
where $G_{t-1}$ is defined in \eqref{def:Gt}, $ x_{\ge t}^k :=(x_t^k,x_{t+1}^k,\ldots,x_T^k)$, and by convention $y^0[t]= y^0_{\ge t}$.
A straightforward observation of \eqref{initinduc2} is that for every $k \ge 1$,
\begin{equation}\label{ineq:obs-G}
    G_{t-1}\left(y^{k}[t];x^{k}_{t-1}\right)  \le \min \left\{ G_{t-1}\left(y^{k-1}[t];x^{k}_{t-1}\right), \, G_{t-1}\left(x_{\ge t}^{k};x^{k}_{t-1}\right) \right\} .
\end{equation}

Next, define for every $k \ge 1$ and $t=1,\ldots,T-1$,
\begin{equation}\label{def:ptk}
    p_{t-1}^k :=G_{t-1}\left(y^k[t];x^k_{t-1}\right)-m_{t-1}^k,
\end{equation}
and observe that $p^k_0$ is the same quantity as in
\eqref{defp0}, in view of \eqref{initinduc2} with $t=1$.
We also observe that for every $k\ge 1$ and $t = 1,\ldots,T-1$,  
\begin{equation}\label{p1ineq0}
p_{t-1}^k \ge 0.
\end{equation}
Indeed, it follows from \eqref{mtleQt} and \eqref{QtleGt} with $u_{\ge t} = y^k[t]$ that
\[
m_{t-1}^k \stackrel{\eqref{mtleQt}}\le \ \Q_{t-1}\left(x^k_{t-1}\right) \stackrel{\eqref{QtleGt}}\le \, G_{t-1}\left(y^k[t];x_{t-1}^k\right).
\]
Hence, \eqref{p1ineq0} holds in view of the definition of $p_{t-1}^k$ in \eqref{def:ptk}.






The next lemma shows that, for any $t = 1,\ldots,T-1$,
$\{p_{t-1}^k\}$ is a sequence of primal-dual gaps for problem \eqref{secondstodpaT} with $x_{t-1}=x_{t-1}^k$ and
provides recursive
relations satisfied by it.

\begin{lemma}\label{ptk}
For every $k\ge 1$ and $t\in\{1,\ldots,T-1\}$, define
\begin{equation}\label{defetk}
 e_t^{k+1} = G_t\left(x_{\ge t+1}^{k+1};x^{k+1}_t\right) - m_t^{k+1},
\end{equation}
then for every $\tau \in (0,1)$, we have
\begin{equation}\label{p1ineq}
p_{t-1}^{k+1}-\tau p_{t-1}^k \le  
\left(1-\tau\right)\left(e_t^{k+1}+2M_t\Delta_t^k\right)
-\frac{\tau\mu_t}{2}\left(\Delta_t^k\right)^2
+2 \tau M_{t-1}\Delta_{t-1}^k.
\end{equation}
\end{lemma}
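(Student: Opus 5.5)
The plan is to combine an upper bound on $G_{t-1}(y^{k+1}[t];x_{t-1}^{k+1})$ with the lower bound on $m_{t-1}^{k+1}$ given by Lemma~\ref{lemfirstq0m}.

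\textbf{Upper bound on the primal term.} By \eqref{ineq:obs-G} with $k$ replaced by $k+1$, $G_{t-1}(y^{k+1}[t];x_{t-1}^{k+1})$ is at most both $G_{t-1}(y^k[t];x_{t-1}^{k+1})$ and $G_{t-1}(x^{k+1}_{\ge t};x_{t-1}^{k+1})$. Hence, for $\tau\in(0,1)$, it is at most the convex combination $\tau G_{t-1}(y^k[t];x_{t-1}^{k+1})+(1-\tau)G_{t-1}(x^{k+1}_{\ge t};x_{t-1}^{k+1})$.

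For the first term, the Lipschitz property \eqref{lipschG} gives
\[
G_{t-1}\left(y^k[t];x_{t-1}^{k+1}\right)\le G_{t-1}\left(y^k[t];x_{t-1}^k\right)+M_{t-1}\Delta_{t-1}^k
= p_{t-1}^k+m_{t-1}^k+M_{t-1}\Delta_{t-1}^k,
\]
using the definition \eqref{def:ptk}. For the second term, \eqref{def:Gt} and \eqref{defetk} give
\[
G_{t-1}\left(x^{k+1}_{\ge t};x_{t-1}^{k+1}\right)=F_t\left(x_t^{k+1},x_{t-1}^{k+1}\right)+G_t\left(x^{k+1}_{\ge t+1};x_t^{k+1}\right)=F_t\left(x_t^{k+1},x_{t-1}^{k+1}\right)+e_t^{k+1}+m_t^{k+1}.
\]

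\textbf{Subtracting the lower bound.} Next I subtract $m_{t-1}^{k+1}$ and use \eqref{ineqlemma} in the form
\[
m_{t-1}^{k+1}\ge \tau m_{t-1}^k+(1-\tau)\left[F_t\left(x_t^{k+1},x_{t-1}^{k+1}\right)+m_t^{k+1}-2M_t\Delta_t^k\right]+\tau\left[\tfrac{\mu_t}{2}(\Delta_t^k)^2-M_{t-1}\Delta_{t-1}^k\right].
\]
The terms $\tau m_{t-1}^k$ cancel, and so do the $(1-\tau)(F_t+m_t^{k+1})$ terms. What remains is
\[
p_{t-1}^{k+1}\le \tau p_{t-1}^k+(1-\tau)\left(e_t^{k+1}+2M_t\Delta_t^k\right)-\tfrac{\tau\mu_t}{2}(\Delta_t^k)^2+2\tau M_{t-1}\Delta_{t-1}^k.
\]
Here the two $\tau M_{t-1}\Delta_{t-1}^k$ contributions add up to the factor $2$ in \eqref{p1ineq}. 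This is exactly the claimed inequality.

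\textbf{Main obstacle.} The only delicate point is the index convention when $t=T-1$: there $G_{T-1}(x_T^{k+1};x_{T-1}^{k+1})=F_T(x_T^{k+1},x_{T-1}^{k+1})$, and $e_{T-1}^{k+1}$ is still well defined, so the argument goes through. I also need \eqref{ineq:obs-G} to hold at $k+1$, which is immediate from \eqref{initinduc2}. Otherwise the proof is bookkeeping, and I expect no real difficulty beyond checking the cancellations.
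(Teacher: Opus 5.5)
Your proposal is correct and follows essentially the same route as the paper. Both arguments bound $G_{t-1}(y^{k+1}[t];x^{k+1}_{t-1})$ by a $\tau$-convex combination via \eqref{ineq:obs-G} at iteration $k+1$, control the shift $x^k_{t-1}\to x^{k+1}_{t-1}$ with \eqref{lipschG}, rewrite $F_t(x_t^{k+1},x_{t-1}^{k+1})+m_t^{k+1}$ as $G_{t-1}(x^{k+1}_{\ge t};x^{k+1}_{t-1})-e_t^{k+1}$, and subtract the lower bound of Lemma~\ref{lemfirstq0m}, so that the two $\tau M_{t-1}\Delta_{t-1}^k$ terms give the factor $2$; the paper only packages these steps as an algebraic decomposition of $p_{t-1}^{k+1}-\tau p_{t-1}^k$.
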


\begin{proof} In what follows, for fixed $k$, 
on top of notation \eqref{shortnot1}, \eqref{shortnot2}, we use the shorthand notation
\begin{equation}\label{shorthand}
\begin{aligned}
\left(e_t^+,e_t\right):=\left(e_t^{k+1},e_t^k\right),\;\left(p_{t-1}^+,p_{t-1}\right):=\left(p_{t-1}^{k+1},p_{t-1}^k\right),\;\left(y^+[t],y[t]\right):=\left(y^{k+1}[t],y^k[t]\right).
\end{aligned}
\end{equation}

We fix $k \geq 1$ and $t \in \{1,\ldots,T\}$, and
use  the shorthand notation
\eqref{shorthand}.
It follows from the definitions of $G_{t-1}$ and $e_t^k$  in \eqref{def:Gt} and \eqref{defetk}, respectively,  that
\begin{align*}
    F_t\left(x_t^+,x_{t-1}^+\right) + m_t^+ &\stackrel{\eqref{defetk}} = F_t\left(x_t^+,x_{t-1}^+\right) + G_t\left(x_{\ge t+1}^+;x^+_t\right) -e_t^+ \stackrel{\eqref{def:Gt}}= G_{t-1}\left(x_{\ge t}^+;x^+_{t-1}\right) -e_t^+.
\end{align*}
Combining the above equation and Lemma \ref{lemfirstq0m}, we obtain
\begin{align}
    m_{t-1}^+ -\tau m_{t-1} - \frac{\tau \mu_t}{2}\left(\Delta_t\right)^2 + \tau M_{t-1}\Delta_{t-1} & \stackrel{\eqref{ineq:goal}}\ge \left(1-\tau\right) \left\{
 F_t\left(x_t^+,x_{t-1}^+\right) + m_t^+
 -2{M}_t \Delta_t \right\} \nn \\
& \ = \left(1-\tau\right) \left \{G_{t-1}\left(x_{\ge t}^+;x^+_{t-1}\right) -e_t^+
 -2{M}_t \Delta_t \right\}. \label{ineq:partial} 
\end{align}
Using the definition of $p_{t-1}^k$ in \eqref{def:ptk} and observation \eqref{ineq:obs-G}, we have
\begin{align*}
& p_{t-1}^+-\tau p_{t-1} \\
& \stackrel{\eqref{def:ptk}}= \left\{ G_{t-1}\left(y^+[t];x^+_{t-1}\right) - m_{t-1}^+ \right\}  -\tau \left \{G_{t-1}\left(y[t];x_{t-1}\right) - m_{t-1} \right\} \\
&= \left\{ G_{t-1}\left(y^+[t];x^+_{t-1}\right) - \tau G_{t-1}\left(y[t];x^+_{t-1}\right) \right\} + \tau \left\{G_{t-1}\left(y[t];x^+_{t-1}\right) - G_{t-1}\left(y[t];x_{t-1}\right) \right\}\\
& \ \ \quad - \left(m_{t-1}^+-\tau m_{t-1}\right) \\
&\stackrel{\eqref{ineq:obs-G}}\leq  \left(1-\tau\right) G_{t-1}\left(x_{\ge t}^+;x^+_{t-1}\right) + \tau \left\{G_{t-1}\left(y[t];x^+_{t-1}\right) - G_{t-1}\left(y[t];x_{t-1}\right) \right\}\\ 
& \ \ \quad - \left(m_{t-1}^+-\tau m_{t-1}\right)\\
&\stackrel{\eqref{ineq:partial}}
\le \tau \left\{G_{t-1}\left(y[t];x^+_{t-1}\right) - G_{t-1}\left(y[t];x_{t-1}\right) \right\} + \left(1-\tau\right) \left \{
  e_t^+ +
 2{M}_t \Delta_t \right\} 
-\frac{\tau \mu_t}{2}\left(\Delta_t\right)^2  
+ \tau M_{t-1}\Delta_{t-1} \\
&\stackrel{\eqref{lipschG}}
\le  \left(1-\tau\right) \left \{
  e_t^+ +
 2{M}_t \Delta_t \right\} 
-\frac{\tau \mu_t}{2}\left(\Delta_t\right)^2  
+ 2 \tau M_{t-1}\Delta_{t-1}
\end{align*}
where the last inequality follows from \eqref{ineq:partial}.
\end{proof}

The next result gives bounds on the quantities
$e_t^{k+1}$
defined in \eqref{defetk} in terms of the $\Delta_t^k$'s.

\begin{lemma} \label{lm:error-recur}
For every $t=1,\ldots,T-1$ and $k \ge 1$, we have   \begin{equation}\label{bdetk}
    0 \leq e_t^{k+1} \leq 2 \sum_{s=t+1}^{T-1} {M}_s \Delta_s^k.
    \end{equation}

\end{lemma}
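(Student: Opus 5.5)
The plan is to write $e_t^{k+1}$ as a telescoping sum of stagewise gaps $m_s^{k+1}-\Gamma_s^{k+1}(x_s^{k+1})$ and then bound each gap by \eqref{eq:sim-cond1'}.

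\textbf{Lower bound.} We have $m_t^{k+1}\le \Q_t(x_t^{k+1})$ by \eqref{mtleQt} (with $t$ replaced by $t+1$ and $k$ by $k+1$). Also $\Q_t(x_t^{k+1})\le G_t(x_{\ge t+1}^{k+1};x_t^{k+1})$ by \eqref{QtleGt} with $u_{\ge t+1}=x_{\ge t+1}^{k+1}$. Together these give $e_t^{k+1}\ge 0$ by the definition \eqref{defetk}.

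\textbf{Telescoping identity.} By \eqref{def:vtk}, for every $s=t+1,\ldots,T$ we have $m_{s-1}^{k+1}=F_s(x_s^{k+1},x_{s-1}^{k+1})+\Gamma_s^{k+1}(x_s^{k+1})$. Hence
\[
F_s\left(x_s^{k+1},x_{s-1}^{k+1}\right)=m_{s-1}^{k+1}-\Gamma_s^{k+1}\left(x_s^{k+1}\right).
\]
Summing over $s=t+1,\ldots,T$ and using \eqref{def:Gt} gives
\[
G_t\left(x_{\ge t+1}^{k+1};x_t^{k+1}\right)=\sum_{s=t+1}^{T} m_{s-1}^{k+1}-\sum_{s=t+1}^{T}\Gamma_s^{k+1}\left(x_s^{k+1}\right).
\]
The first sum splits as $m_t^{k+1}+\sum_{s=t+1}^{T-1}m_s^{k+1}$. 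In the second sum the term with $s=T$ vanishes, because $\Gamma_T^{k+1}\equiv 0$. Subtracting $m_t^{k+1}$ yields
\[
e_t^{k+1}=\sum_{s=t+1}^{T-1}\left[m_s^{k+1}-\Gamma_s^{k+1}\left(x_s^{k+1}\right)\right].
\]
For $t=T-1$ this sum is empty, so $e_{T-1}^{k+1}=0$, which is consistent with \eqref{bdetk}.

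\textbf{Termwise bound.} By \eqref{ineq:require}, $\Gamma_s^{k+1}\ge \ell_s^{k+1}$. Therefore each summand is at most $m_s^{k+1}-\ell_s^{k+1}(x_s^{k+1})$, and by \eqref{eq:sim-cond1'} of Lemma~\ref{lemltkv} this is at most $2M_s\Delta_s^k$. Summing over $s=t+1,\ldots,T-1$ gives the upper bound in \eqref{bdetk}.

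The only delicate point is the index bookkeeping in the telescoping step: the terminal convention $\Gamma_T^{k+1}\equiv 0$ is what makes the last stage drop out.
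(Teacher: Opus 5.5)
Your proof is correct and follows essentially the same route as the paper: the paper proves the one-step identity $e_{t-1}^{k+1} = e_t^{k+1} + [m_t^{k+1} - \Gamma_t^{k+1}(x_t^{k+1})]$, starts from $e_{T-1}^{k+1}=0$ (using $\Gamma_T^{k+1}\equiv 0$), bounds each increment via \eqref{ineq:require} and \eqref{eq:sim-cond1'}, and telescopes, whereas you write the telescoped sum in closed form directly. Your lower bound via $m_t^{k+1}\le \Q_t(x_t^{k+1})\le G_t(x_{\ge t+1}^{k+1};x_t^{k+1})$ is the same argument the paper uses.
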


\begin{proof} We fix $k \geq 1$ and
use again the shorthand notation
\eqref{shorthand}. The first inequality in \eqref{bdetk} immediately follows from the definition of $e_t$ in \eqref{defetk} and the fact that $G_t(x_{\ge t+1};x_t)$ and $m_t$ are upper and lower bounds on $\Q_t(x_t)$.

First, observe that
$e_{T-1}^+=G_{T-1}(x_t^+,x_{t-1}^+)-m^+_{T-1}=F_T(x_t^+,x_{t-1}^+)-m^+_{T-1}=0$ due
to the definitions of $e_{T-1}^+$ and $G_{T-1}$ in \eqref{defetk} and \eqref{def:Gt}, respectively, the identities in \eqref{def:vtk} with $t=T$,
and the fact that $\Gamma_T^+$ is the null function.
We next show that
for every $t \in \{2,\ldots,T-1\}$, we have
    \begin{equation} \label{bdetk0}
    e_{t-1}^+ - e_t^+ \le 2 M_t \Delta_t.
    \end{equation}
    Using \eqref{def:Gt}, \eqref{def:vtk}, \eqref{ineq:require}, \eqref{eq:sim-cond1'}, and \eqref{defetk}, we conclude that for any $t \in \{2,\ldots, T-1\}$,
\begin{align*}
e_{t-1}^+ & \stackrel{\ \ \, \eqref{defetk} \ \ \, }= G_{t-1}\left(x^+_{\ge t};x^+_{t-1}\right)-\m_{t-1}^+  \\
&\stackrel{\eqref{def:vtk},\eqref{def:Gt}}
= 
F_t\left(x_t^+,x_{t-1}^+\right)+
 G_t\left(x^+_{\ge t+1};x^+_t\right) - \left[F_t\left(x_t^+,x_{t-1}^+\right)+\Gamma_t^+\left(x_t^+\right)\right]\\
& \stackrel{\qquad \ \ \ }=  \left[ G_t\left(x^+_{\ge t+1};x^+_t\right)-m_t^+ \right] + \left[m_t^+-\Gamma_t^+\left(x_t^+\right) \right] \\
&\stackrel{\ \ \, \eqref{defetk}\ \ \, }{=} e_t^++ \left[ m_t^+-\Gamma_t^+\left(x_t^+\right) \right]  \stackrel{\ \eqref{ineq:require}\ }\leq e_t^++ m_t^+-\ell_t^+\left(x_t^+\right) \\
& \stackrel{\ \ \, \eqref{eq:sim-cond1'} \ \ \, }\leq e_t^+ + 2 {M}_t \Delta_t.
\end{align*}
 We have thus proved that \eqref{bdetk0} holds.
The above conclusions then imply that,
for every $t\in \{1,\ldots,T-2\}$,
\[
e_t^+ =  e_t^+ - e_{T-1}^+ = \sum_{s=t+1}^{T-1} \left(e_{s-1}^+-e_{s}^+\right) \le 2\sum_{s=t+1}^{T-1} 
{M}_s \Delta_s,
\]
and hence that the second inequality in \eqref{bdetk}  holds.
\end{proof}

The following lemma gives a generic recursive formula for the aggregation of gaps $\{p_t^k\}$ over stages with general $\tau \in (0,1)$ and
$\theta=(\theta_1,\ldots,\theta_t) \in \R^{T}_{+}$.

\begin{lemma}\label{lem:tk}
For any $\tau \in (0,1)$ and
$\theta=(\theta_1,\ldots,\theta_{T-1}) \in \R^{T-1}_{+}$ such that
$\theta_1=1$ and $\theta_T=0$, the sequence $\{r^k(\theta)\}_{k \ge 1}$ defined as
\begin{equation}\label{def:tk}
        r^k(\theta):= \sum_{t=1}^{T-1} \theta_t p_{t-1}^k \qquad \forall \, k \ge 1
    \end{equation}
satisfies
    \begin{equation}\label{ineqrk}
    r^{k+1}(\theta) \le \tau r^k(\theta) +
(1-\tau) \sum_{t=1}^{T-1} C_t(\tau,\theta) \qquad \forall \, k \ge 1,
    \end{equation}
where
\begin{equation}\label{def:ct}
    C_t(\tau,\theta):=
 \frac{2 M_t^2 \left[(1-\tau)\sum_{i=1}^{t} \theta_i+\tau \theta_{t+1}\right]^2}{\tau (1-\tau)\mu_t \theta_t}, \quad \forall \, t=1,\ldots,T-1.
\end{equation}
\end{lemma}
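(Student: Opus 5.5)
The plan is to take the one-stage recursion \eqref{p1ineq} of Lemma~\ref{ptk}, multiply it by $\theta_t\ge 0$, and sum over $t=1,\ldots,T-1$. The left-hand side then becomes exactly $r^{k+1}(\theta)-\tau r^k(\theta)$ by \eqref{def:tk}. On the right-hand side we get three kinds of terms:
\begin{itemize}
\item the error terms $(1-\tau)\theta_t e_t^{k+1}$;
\item linear terms in the $\Delta_s^k$;
\item the negative quadratic terms $-\tfrac{\tau\mu_t\theta_t}{2}(\Delta_t^k)^2$.
\end{itemize}
The goal is to rewrite the right-hand side as a sum over stages of quadratics in a single variable $\Delta_t^k$, and then maximize each one.

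First we eliminate the $e_t^{k+1}$ using Lemma~\ref{lm:error-recur}, i.e.\ $e_t^{k+1}\le 2\sum_{s=t+1}^{T-1}M_s\Delta_s^k$. This is legitimate because $(1-\tau)\theta_t\ge 0$. Next we collect, for each $t\in\{1,\ldots,T-1\}$, the total coefficient of $\Delta_t^k$. It has three sources:
\begin{itemize}
\item the term $(1-\tau)\theta_t\cdot 2M_t\Delta_t^k$ from stage $t$ itself;
\item the bounds on $e_i^{k+1}$ for $i=1,\ldots,t-1$, which contribute $(1-\tau)\theta_i\cdot 2M_t\Delta_t^k$ each;
\item the term $2\tau\theta_{t+1}M_t\Delta_t^k$ from stage $t+1$, since its last term is $2\tau M_{t}\Delta_{t}^k$.
\end{itemize}
At stage $1$ the analogous term involves $\Delta_0^k$, which vanishes by \eqref{delta0eq0}. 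At $t=T-1$ the third source is absent, which matches the convention $\theta_T=0$. Summing the three sources, the coefficient of $\Delta_t^k$ is $2M_t a_t$, where $a_t:=(1-\tau)\sum_{i=1}^t\theta_i+\tau\theta_{t+1}$.

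This gives
\[
r^{k+1}(\theta)-\tau r^k(\theta)\le \sum_{t=1}^{T-1}\Big[2M_ta_t\Delta_t^k-\frac{\tau\mu_t\theta_t}{2}(\Delta_t^k)^2\Big].
\]
Each bracket is bounded by the elementary inequality $a\Delta-b\Delta^2\le a^2/(4b)$ for $b>0$. With $a=2M_ta_t$ and $b=\tau\mu_t\theta_t/2$, this yields
\[
\frac{4M_t^2a_t^2}{2\tau\mu_t\theta_t}=\frac{2M_t^2a_t^2}{\tau\mu_t\theta_t}=(1-\tau)C_t(\tau,\theta)
\]
by \eqref{def:ct}. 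This is exactly \eqref{ineqrk}.

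The main obstacle is purely bookkeeping: the index shift in the double sum coming from the $e_i^{k+1}$ bounds, and checking the boundary stages $t=1$ and $t=T-1$. One point must also be made explicit. If some $\theta_t=0$, then $C_t$ has a zero denominator, so we either adopt the convention $C_t=+\infty$ (making the bound trivial) or note that the intended use has $\theta_t>0$. In the latter case the quadratic maximization step is valid for every $t$.
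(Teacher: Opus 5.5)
Your proposal is correct and follows the paper's proof essentially verbatim. You multiply \eqref{p1ineq} by $\theta_t$, bound $e_t^{k+1}$ via Lemma~\ref{lm:error-recur}, and regroup the double sum so that $M_t\Delta_t^k$ carries the coefficient $2[(1-\tau)\sum_{i\le t}\theta_i+\tau\theta_{t+1}]$. You then maximize each stagewise concave quadratic in $\Delta_t^k$. Your remark that this last step needs $\theta_t>0$ (or the convention $C_t=+\infty$) is a fair point that the paper leaves implicit; it is harmless, since the weights \eqref{def:theta} used later are all positive.
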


\begin{proof}
Again, we employ the notation \eqref{shorthand}, along with $(r^+, r) = (r^{k+1}(\theta), r^k(\theta)).$ It follows from Lemmas~\ref{ptk} and \ref{lm:error-recur}(b) that for $t=1,\ldots,T-1$,
\begin{align}
p_{t-1}^+-\tau p_{t-1} + \frac{\tau \mu_t}{2}(\Delta_t)^2
& \stackrel{\eqref{p1ineq}}\leq  
(1-\tau) \left[e_t^+ + 2M_t \Delta_t \right]  +  2\tau M_{t-1} \Delta_{t-1} \nn \\
 & \stackrel{\eqref{bdetk}}{\leq}  
2(1-\tau)\sum_{i=t}^{T-1}  {M}_i \Delta_i +  2\tau M_{t-1} \Delta_{t-1}. \label{ineq:e2}
\end{align}
Multiplying inequality \eqref{ineq:e2} 
 by $\theta_t$,
summing 
the resulting inequality from $t=1$ to $T-1$, and using \eqref{def:tk}, we have
\begin{equation}\label{ineqr}
\begin{aligned}
    r^+-\tau r + \frac{\tau}{2}\sum_{t=1}^{T-1}\mu_t\theta_t(\Delta_t)^2
&\le 2(1-\tau)\sum_{t=1}^{T-1}\theta_t\sum_{i=t}^{T-1} M_i\Delta_i+2 \tau\sum_{t=1}^{T-1}\theta_t M_{t-1}\Delta_{t-1}\\
&=\sum_{t=1}^{T-1} M_t\Delta_t\left[2(1-\tau)\sum_{i=1}^{t}\theta_i+2\tau\theta_{t+1}\right]
\end{aligned}
\end{equation}
where the last equality follows from facts that
$\Delta_0=0$, $\theta_t=0$,
and the identities 
\[
\sum_{t=1}^{T-1}\theta_t\sum_{i=t}^{T-1} M_i\Delta_i
= \sum_{t=1}^{T-1} M_t\Delta_t \sum_{i=1}^{t}\theta_i,
\qquad \sum_{t=1}^{T-1}\theta_t M_{t-1}\Delta_{t-1} = \sum_{t=1}^{T-1}\theta_{t+1} M_t\Delta_t.
\]
Inequality
\eqref{ineqrk} now follows from inequality \eqref{ineqr}
the inequality $-a_t x^2+b_t x\le b_t^2/(4a_t)$ with
\[
a_t=\frac{\tau}{2}\mu_t\theta_t,\qquad
b_t=2 M_t\left[(1-\tau)\sum_{i=1}^{t}\theta_i+\tau\theta_{t+1}\right],\qquad
x=\Delta_t,
\]
and the definition of $C_t(\tau,\theta)$ in \eqref{def:ct}.
\end{proof}

The following lemma is a special case of Lemma \ref{lem:tk} with specific $\theta$ and $\tau$, which are of interest in our analysis of FDDP.

\begin{lemma}\label{lem:new-r}
Define $\overline Q$ by \eqref{defQeps}, and
\begin{equation}\label{defQeps2}
\varepsilon := \min \left\{ \frac{1}{2} ,  \frac{\bar \varepsilon}{\overline Q}  \right\}, \quad \tau := \frac{1}{1+\varepsilon^{2^{T-1}-1} }\in (0,1),
\end{equation}
    where $\bar \varepsilon>0$ is the tolerance input to FDDP.
Moreover, define $\{r^k\}_{k \ge 1}$ as the sequence $\{r^k(\theta)\}_{k \ge 1}$ defined in Lemma \ref{lem:tk}  with
weights $\theta_t$ given by
\begin{equation}\label{def:theta}
    \left\{
\begin{aligned}
\theta_t &= \varepsilon^{\,2^{T-1}-2^{T-t}}, \quad t=1,2,\ldots,T-1,\\
\theta_T &= 0.
\end{aligned}
\right.
\end{equation}
Then, we have for every $t=1,\ldots, T-1$, $\sum_{i=1}^t \theta_i \le 2$, and for every $k \ge 1$,
\begin{equation}\label{ineq:tk-T}
    r^{k+1} \le \tau r^k+(1-\tau) \frac{\bar \varepsilon}{2}.
\end{equation}
\end{lemma}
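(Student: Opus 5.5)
The plan is to obtain \eqref{ineq:tk-T} directly from Lemma~\ref{lem:tk}, using the specific $\tau$ and $\theta$. It then suffices to prove two things: $\sum_{i=1}^t\theta_i\le 2$, and $\sum_{t=1}^{T-1}C_t(\tau,\theta)\le \bar\varepsilon/2$. Throughout I write $N:=2^{T-1}-1$, so that $\tau=1/(1+\varepsilon^N)$ and
\[
\frac{1-\tau}{\tau}=\varepsilon^N,\qquad \frac{\tau}{1-\tau}=\varepsilon^{-N}.
\]
Note that $\theta_1=\varepsilon^0=1$ and $\theta_T=0$, so Lemma~\ref{lem:tk} indeed applies.

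\emph{Bound on partial sums.} The exponents $2^{T-1}-2^{T-t}$ are $0,\ 2^{T-2},\ 2^{T-2}+2^{T-3},\dots$. This is a sum of $t-1$ distinct positive powers of two, so the exponent is at least $t-1$. Since $\varepsilon\le 1/2$, this gives $\theta_t\le \varepsilon^{t-1}\le 2^{-(t-1)}$, and hence $\sum_{i=1}^t\theta_i\le 2$.

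\emph{Bound on $C_t$.} I will use $(a+b)^2\le 2a^2+2b^2$ in \eqref{def:ct}, which gives
\[
C_t(\tau,\theta)\le \frac{2M_t^2}{\mu_t}\left[\frac{2(1-\tau)^2\left(\sum_{i\le t}\theta_i\right)^2}{\tau(1-\tau)\theta_t}+\frac{2\tau^2\theta_{t+1}^2}{\tau(1-\tau)\theta_t}\right].
\]
For the first term I use $\sum_{i\le t}\theta_i\le 2$ to bound it by $8\,\frac{1-\tau}{\tau}\,\theta_t^{-1}=8\varepsilon^{N}\varepsilon^{-(2^{T-1}-2^{T-t})}=8\varepsilon^{2^{T-t}-1}$. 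Since $T-t\ge1$ and $\varepsilon<1$, this is at most $8\varepsilon$.

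The second term equals $2\frac{\tau}{1-\tau}\theta_{t+1}^2/\theta_t=2\varepsilon^{-N}\theta_{t+1}^2/\theta_t$. Its exponent of $\varepsilon$ is
\[
2\left(2^{T-1}-2^{T-t-1}\right)-\left(2^{T-1}-2^{T-t}\right)-\left(2^{T-1}-1\right)=1,
\]
so the term equals $2\varepsilon$. For $t=T-1$ it vanishes because $\theta_T=0$.

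Hence $C_t\le 20\varepsilon M_t^2/\mu_t$. Summing over $t$ and using \eqref{defQeps} gives $\sum_t C_t\le \varepsilon\overline Q/2\le \bar\varepsilon/2$, since $\varepsilon\le\bar\varepsilon/\overline Q$. Plugging this into \eqref{ineqrk} yields \eqref{ineq:tk-T}.

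The main obstacle is the exponent bookkeeping in the second term. The choice of doubly exponential weights is exactly what makes $\theta_{t+1}^2/(\varepsilon^N\theta_t)$ collapse to $\varepsilon$, so I will double-check that computation, including the boundary case $t=T-1$.
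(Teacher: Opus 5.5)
Your proposal is correct and follows the paper's route. You apply Lemma~\ref{lem:tk}, split $C_t$ using $(a+b)^2\le 2a^2+2b^2$, and use the exponent identities $\frac{1-\tau}{\tau\theta_t}=\varepsilon^{2^{T-t}-1}\le\varepsilon$ and $\frac{\tau\theta_{t+1}^2}{(1-\tau)\theta_t}\le\varepsilon$ to get $C_t\le 20\varepsilon M_t^2/\mu_t$. The only cosmetic difference is the partial-sum bound, which you obtain directly from $\theta_t\le\varepsilon^{t-1}$, whereas the paper factors out $\varepsilon^{2^{T-2}}$ before summing the geometric series.
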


\begin{proof} 
Observe that for $i=2, \ldots,T-1$, we have
\begin{equation}\label{ineq1sp}
2^{T-2}-2^{T-i}=2^{T-i}\left(2^{i-2}-1\right)
\geq 2^{i-2}-1\geq i-2.
\end{equation}
This inequality and the definition of $\theta_t$ in \eqref{def:theta} imply that for every $t=1,\ldots, T-1$,
\[
\sum_{i=1}^{t}\theta_i
=1+\varepsilon^{2^{T-2}}\sum_{i=2}^{t}\varepsilon^{\,2^{T-2}-2^{T-i}}
\stackrel{\eqref{ineq1sp}}\le 1+\varepsilon^{2^{T-2}}\sum_{i=2}^{t}\varepsilon^{\,i-2}
\le 1+\frac{\varepsilon^{2^{T-2}}}{1-\varepsilon}
\le 1+\frac{\varepsilon}{1-\varepsilon}\le 2,
\]
where the last inequality is due to the fact that $\varepsilon \leq 1/2$.
Using the above inequality and the definition of $C_t(\tau,\theta)$ in \eqref{def:ct} with $\tau$ and $\theta$ in \eqref{defQeps2} and \eqref{def:theta}, respectively, we have
\begin{equation}\label{ineq:Ct}
    C_t(\tau,\theta) \stackrel{\eqref{def:ct}}{\le} \frac{2 M_t^2 \left[2(1-\tau) + \tau \theta_{t+1}\right]^2}{\tau (1-\tau)\mu_t \theta_t}
\le \frac{M_t^2}{\mu_t} \left( \frac{16 \, (1-\tau)}{\tau \theta_t} + \frac{4\tau \theta_{t+1}^2}{(1-\tau) \theta_t} \right),
\end{equation}
where the second inequality follows from the fact that $(a+b)^2\le 2a^2+2b^2$ for every $a,b \in \R$.
Moreover, the definitions of $\tau$ and $\theta_t$ in \eqref{defQeps2} and \eqref{def:theta}, respectively, imply that for $t=1,\ldots,T-1$,
\[
\frac{1-\tau}{\tau \theta_t}=\varepsilon^{2^{T-t}-1}\le \varepsilon,
\qquad
\frac{\tau \theta_{t+1}^2}{(1-\tau)\theta_t} \le \varepsilon.
\]
Plugging the above two inequalities into \eqref{ineq:Ct} yields that for $t=1,\ldots,T-1$,
\[
C_t(\tau,\theta) \le \frac{20  M_t^2 \varepsilon}{\mu_t}.
\]
Using the above inequality, the definition of $\overline Q$ in \eqref{defQeps}, and Lemma \ref{lem:tk}, we conclude that
\[
r^{k+1} \stackrel{\eqref{ineqrk},\eqref{defQeps}}\leq \tau r^k + 
(1-\tau)\frac{\overline Q \varepsilon}{2}.
\]
Finally, \eqref{ineq:tk-T} immediately follows from the definition of $\varepsilon$ in \eqref{defQeps2}.
\end{proof}

Before proving Theorem
\ref{prop:null-strong}, we need the following lemma
which shows that
$\overline r$ given by \eqref{def:barr} is an upper bound
for $r^1$ defined in Lemma \ref{lem:new-r}. 

\begin{lemma}\label{ubr1}
The scalar $r^1$ defined in Lemma \ref{lem:new-r} satisfies $r^1 \le \overline r$ where $\overline r$ is as in \eqref{def:barr}.
\end{lemma}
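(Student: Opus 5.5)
Since $0\le\theta_t\le 1$ (because $\varepsilon\le 1/2$ and the exponents in \eqref{def:theta} are nonnegative) and $p_{t-1}^1\ge 0$ by \eqref{p1ineq0}, the definition \eqref{def:tk} gives
\[
r^1=\sum_{t=1}^{T-1}\theta_t p_{t-1}^1\le \sum_{t=1}^{T-1} p_{t-1}^1 .
\]
It therefore suffices to bound each gap $p_{t-1}^1$ and sum the resulting bounds.

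For a fixed $t\in\{1,\ldots,T-1\}$, I write out the two terms of $p_{t-1}^1 = G_{t-1}(y^1[t];x_{t-1}^1)-m_{t-1}^1$. By \eqref{ineq:obs-G}, $G_{t-1}(y^1[t];x^1_{t-1})\le G_{t-1}(x^1_{\ge t};x^1_{t-1})=\sum_{s=t}^T F_s(x_s^1,x_{s-1}^1)\le \sum_{s=t}^T U_s$, because every pair $(x_s^1,x_{s-1}^1)$ lies in $X_s\times X_{s-1}$. For the lower bound, \eqref{def:vtk} gives $m_{t-1}^1=F_t(x_t^1,x_{t-1}^1)+\Gamma_t^1(x_t^1)$. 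I would bound $F_t(x_t^1,x_{t-1}^1)$ from below by $\min F_t$ on the compact set. This needs care, since the theorem supplies only upper bounds $U_t$ together with the lower bounds $L_t$ on $\Gamma_t^1$. The cleaner route is to cancel this term: $F_t(x_t^1,x_{t-1}^1)$ appears in both $G_{t-1}(x^1_{\ge t};x^1_{t-1})$ and $m_{t-1}^1$. Hence
\[
p_{t-1}^1\le \sum_{s=t+1}^T F_s(x_s^1,x_{s-1}^1)-\Gamma_t^1(x_t^1)\le \sum_{s=t+1}^{T}U_s - L_t ,
\]
with $L_t<0$ a lower bound for $\Gamma_t^1$ on $X_t$, so $-\Gamma_t^1(x_t^1)\le -L_t$.

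Summing over $t=1,\ldots,T-1$ gives
\[
r^1\le\sum_{t=1}^{T-1}\sum_{s=t+1}^T U_s-\sum_{t=1}^{T-1}L_t .
\]
The double sum counts $U_s$ exactly $s-1$ times, which is not $2\sum_{s=2}^T U_s$ when $T>3$. So this step needs the weights $\theta_t$ rather than the crude bound $\theta_t\le 1$. Keeping the weights, the double sum becomes $\sum_{s=2}^T U_s\sum_{t=1}^{s-1}\theta_t$, and Lemma~\ref{lem:new-r} gives $\sum_{i=1}^{s-1}\theta_i\le 2$, so it is at most $2\sum_{s=2}^T U_s$ (using $U_s\ge0$). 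For the second part, $-\sum_t\theta_t\Gamma_t^1(x_t^1)\le-\sum_t\theta_t L_t\le -\sum_t L_t$, since $L_t<0$ and $0\le\theta_t\le1$. Together these give $r^1\le \overline r$ as in \eqref{def:barr}.

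I expect the main obstacle to be this bookkeeping. The bound has to go through the weighted sum, with the cancellation of the $F_t$ term and the inequality $\sum_{i\le t}\theta_i\le 2$ from Lemma~\ref{lem:new-r}, because the naive bound $\theta_t\le1$ loses the constant $2$. I also need the sign facts $U_s\ge 0$ and $L_t<0$ so that the weights can be bounded by $1$ or $2$ in the right direction.
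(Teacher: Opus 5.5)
Your proposal is correct and follows the paper's own proof: you bound $G_{t-1}(y^1[t];x^1_{t-1})$ via \eqref{ineq:obs-G}, cancel the $F_t$ term against $m_{t-1}^1$, and swap the order of the weighted double sum. You then use $\sum_{i=1}^{s-1}\theta_i\le 2$ with $U_s\ge 0$, and $\theta_t\le 1$ with $L_t<0$. The opening reduction to $\sum_t p_{t-1}^1$ via $\theta_t\le 1$ is an unneeded detour, which you correctly abandon in favor of keeping the weights.
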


\begin{proof}
It follows from the definition of $r^1(\theta)$ in \eqref{def:tk} with $\theta$ as in \eqref{def:theta} that
\begin{equation}\label{eq:r1}
    r^1 = r^1(\theta) \stackrel{\eqref{def:tk}}= \sum_{t=1}^{T-1} \theta_t p_{t-1}^1 \stackrel{\eqref{def:ptk}} = \sum_{t=1}^{T-1} \theta_t \left( G_{t-1}\left(y^1[t]; x^{1}_{t-1}\right) - m_{t-1}^1 \right),
\end{equation}
where $p_{t-1}^1$ is as in \eqref{def:ptk}.
Using \eqref{ineq:obs-G} with $k=1$ and the definitions of $G_{t-1}$ and $m_{t-1}^1$ in \eqref{def:Gt} and \eqref{def:vtk}, respectively, we have
\begin{align*}
& G_{t-1}\left(y^1[t]\,; x^{1}_{t-1}\right) - m_{t-1}^1 
 \stackrel{\eqref{ineq:obs-G}}\le G_{t-1}\left(x^1_{\ge t}\,; x^{1}_{t-1}\right) - m_{t-1}^1  \\
&\stackrel{\eqref{def:Gt},\eqref{def:vtk}}= \sum_{s=t}^T F_s\left(x_s^1, x_{s-1}^1\right) - F_t\left(x_t^1, x_{t-1}^1\right) - \Gamma_t^{1}\left(x_t^1\right) 
= \sum_{s=t+1}^T F_s\left(x_s^1, x_{s-1}^1\right) - \Gamma_t^{1}\left(x_t^1\right).
\end{align*}
Substituting the above bound back into the definition of $r^1$ in \eqref{eq:r1}, we obtain
\begin{align*}
r^1 &\stackrel{\eqref{eq:r1}}\le \sum_{t=1}^{T-1} \theta_t \left( \sum_{s=t+1}^T F_s\left(x_s^1, x_{s-1}^1\right) - \Gamma_t^{1}\left(x_t^1\right) \right) \\
    &= \sum_{s=2}^T F_s\left(x_s^1, x_{s-1}^1\right) \left( \sum_{t=1}^{s-1} \theta_t \right) - \sum_{t=1}^{T-1} \theta_t \Gamma_t^{1}\left(x_t^1\right) \le \sum_{s=2}^T U_s \left( \sum_{t=1}^{s-1} \theta_t \right) - \sum_{t=1}^{T-1} \theta_t L_t,
\end{align*}
where the last inequality follows from the assumption in Theorem \ref{prop:null-strong} that $U_s$ (resp., $L_t$) is an upper (resp. lower) bound on $F_s$ (resp., $\Gamma_t$)
Furthermore, we recall from \eqref{def:theta} that $\theta_t \le 1$, and from Lemma \eqref{lem:new-r} that $\sum_{t=1}^{s-1} \theta_t \le 2$. Applying these coefficient bounds and using the definition of $\overline r$ in \eqref{def:barr} and the fact that $L_t <0$ gives $r^1 \le \overline r$ as claimed.
\end{proof}

$\vspace*{0.1cm}$

We are now ready to prove Theorem \ref{prop:null-strong}.
\vgap

\noindent
{\textbf{Proof of Theorem \ref{prop:null-strong}.}}
    Defining $\tau$ by 
\eqref{defQeps2} with $\varepsilon$ given
in \eqref{defQeps}, using the inequality $\tau \le e^{\tau-1}$
	    and  Lemma \ref{lem:new-r}, we  conclude that 
        \[
		r^k - \frac{\bar \varepsilon}{2} \stackrel{\eqref{ineq:tk-T}}\le \tau^{k-1}\left(r^1 - \frac{\bar \varepsilon}{2} \right) \le \tau^{k-1}r^1 \le e^{(\tau-1) (k-1)} r^1. 
		\]
        To have $r^k \le \bar \varepsilon$, we need at most        
        \begin{align}
        \frac{1}{1-\tau} \log\left( \frac{2 r^1}{\bar \varepsilon}\right) + 1
            &=\left(1+\frac{1}{\varepsilon^{2^{T-1}-1}}\right)\log\left( \frac{2 r^1}{\bar \varepsilon}\right) + 1 \\
             &\leq  \left(1+\left[\max\left(2,\frac{\overline Q}{\bar \varepsilon}\right)\right]^{2^{T-1}-1} \right)\log\left( \frac{2 {\overline r}}{\bar \varepsilon}\right) + 1 
        \end{align}
        iterations, where we have used $r^1 \leq \overline r$ (from Lemma \ref{ubr1}).

\section{Numerical experiments}\label{numsim}

\par {\textbf{Problem and goal.}}
For simplicity, this section focuses on problem \eqref{pb:multistage0T} instead of the main problem \eqref{pb:multistage0Tc} of the paper, since the core of our development, FDDP, is designed to solve the unconstrained problem \eqref{pb:multistage0T}.
We illustrate that both multi-cut and two-cut DDP variants can be more efficient than 
a direct solution method to solve \eqref{pb:multistage0T} presenting
the CPU time
versus problem parameters $n$, $T$, and $\mu_t$.
The experiments are done with 
the following optimization problem
\begin{equation}\label{numproblem}
\left\{
\begin{array}{@{}l@{\quad}l@{}}
\displaystyle \min_{x_1,\ldots,x_T \in \mathbb{R}^n} &
\displaystyle \sum_{t=1}^T \max_{i=1,\ldots,m}
\Bigg[
\frac12
\begin{pmatrix} x_{t-1} \\ x_t \end{pmatrix}^{\!\top}
\left(\xi_{ti}\xi_{ti}^\top + \lambda_i I_{2n}\right)
\begin{pmatrix} x_{t-1} \\ x_t \end{pmatrix}
+\xi_{ti}^\top
\begin{pmatrix} x_{t-1} \\ x_t \end{pmatrix}
\Bigg] \\[2pt]
\text{\, \quad s.t.} &
x_t \ge 0,\qquad \mathbf{1}^\top x_t = 1,\qquad t=1,\ldots,T
\end{array}
\right.
\end{equation}
for known $x_0$, $\lambda_i>0$, and parameters $\xi_{t i} \in \mathbb{R}^{2n}$. This problem is of form \eqref{pb:multistage0T}
with
$$F_t(x_t,x_{t-1})=\max_{i=1,\ldots,m} f_{t i}(x_t,x_{t-1})$$ for
\[
f_{t i}(x_t,x_{t-1}) = \frac12
\begin{pmatrix} x_{t-1} \\ x_t \end{pmatrix}^{\!\top}
\left(\xi_{ti}\xi_{ti}^\top + \lambda_i I_{2n}\right)
\begin{pmatrix} x_{t-1} \\ x_t \end{pmatrix}
+\xi_{ti}^\top
\begin{pmatrix} x_{t-1} \\ x_t \end{pmatrix}.
\]

It is easy to check that Assumptions (A0)-(A3) of our complexity analysis from Section \ref{sec:analysis} are satisfied.
In particular, functions
$F_t(\cdot,x_{t-1})$ are $\mu_t$-strongly convex
with $\mu_t=\min_{i=1,\ldots,m} \lambda_i$.

\if{
\red{??}
\begin{lemma}
Let $f_i:X \rightarrow \mathbb{R}, i \in I$, be convex functions
defined on a convex set $X$ with $f_i$ being $\alpha_i$-strongly convex. Then
$f(x)=\max_{i \in I} f_i(x)$ is 
$\mu$-strongly convex with 
\begin{equation}
\mu=\displaystyle \min_{i \in I} \; \alpha_i.
\end{equation}
\end{lemma}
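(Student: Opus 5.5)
The lemma states that if each $f_i$ is $\alpha_i$-strongly convex on the convex set $X$, then $f=\max_{i\in I} f_i$ is $\mu$-strongly convex with $\mu=\min_{i\in I}\alpha_i$. I would prove it directly from the definition of strong convexity in Subsection~\ref{subsec:notation}. I will tacitly assume $\min_{i\in I}\alpha_i$ is attained (e.g., $I$ finite, as in the application with $I=\{1,\ldots,m\}$); otherwise $\inf$ works the same way.

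The argument rests on two observations. The first is monotonicity in the modulus: an $\alpha_i$-strongly convex function is also $\mu$-strongly convex for any $0\le\mu\le\alpha_i$. This holds because the subtracted term $\frac{\alpha(1-\alpha)\mu}{2}\|z-z'\|^2$ is nonnegative and nondecreasing in $\mu$. Hence every $f_i$ satisfies the defining inequality with the common modulus $\mu$.

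The second observation is that the defining inequality passes to the pointwise maximum. Fix $z,z'\in X$ and $\alpha\in[0,1]$, and set $z_\alpha=\alpha z+(1-\alpha)z'\in X$. For each $i\in I$,
\[
f_i(z_\alpha)\le \alpha f_i(z)+(1-\alpha)f_i(z')-\frac{\alpha(1-\alpha)\mu}{2}\|z-z'\|^2 .
\]
Bounding $f_i(z)\le f(z)$ and $f_i(z')\le f(z')$ on the right-hand side, which is legitimate since $\alpha,1-\alpha\ge 0$, gives
\[
f_i(z_\alpha)\le \alpha f(z)+(1-\alpha)f(z')-\frac{\alpha(1-\alpha)\mu}{2}\|z-z'\|^2 .
\]
The right-hand side no longer depends on $i$, so taking the supremum over $i\in I$ on the left yields the required inequality for $f$.

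Finiteness of $f$ on $X$ is immediate when $I$ is finite. For general $I$ one adds the standing assumption that $f<\infty$, or works with values in $(-\infty,+\infty]$ as in the paper's definition. There is no real obstacle in the proof. The only point to note is that the modulus cannot in general be improved beyond $\min_i\alpha_i$: near points where the maximum is attained by the least strongly convex piece, $f$ coincides with that piece. In the application to \eqref{numproblem}, each $f_{ti}(\cdot,x_{t-1})$ has Hessian in $x_t$ equal to the lower-right block of $\xi_{ti}\xi_{ti}^\top+\lambda_i I_{2n}$, which is $\succeq\lambda_i I_n$. Hence $\alpha_i=\lambda_i$, and the lemma gives $\mu_t=\min_i\lambda_i$.
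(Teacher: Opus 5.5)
Your proof is correct and follows essentially the same route as the paper's. Like the paper, you lower each modulus $\alpha_i$ to $\mu=\min_{i\in I}\alpha_i$, bound $f_i(z)$ and $f_i(z')$ by $f(z)$ and $f(z')$ on the right-hand side, and take the maximum over $i\in I$ on the left; your added remarks on sharpness of $\mu$ and on the application are not needed for the lemma.
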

\begin{proof} For every $x,y \in X$ and $t \in [0,1]$, we have
for every $i \in I$ that
$$
\begin{array}{lcl}
f_i(t x+(1-t)y) & \leq &\displaystyle   t f_i(x)+(1-t)f_i(y) - \frac{t(1-t) \alpha_i}{2}\|x-y\|^2 \\
& \leq & \displaystyle  t f(x)+(1-t)f(y) - \frac{t(1-t) \mu}{2}\|x-y\|^2 \\
\end{array}
$$
and therefore
$$
f(t x+(1-t)y)  \ \leq \ \displaystyle  t f(x)+(1-t)f(y) - \frac{t(1-t) \mu}{2}\|x-y\|^2 
$$
which achieves the proof.
\end{proof}
}\fi

We can write dynamic programming
equations for problem \eqref{numproblem} and apply two-cut DDP (a special instance of FDDP) and multi-cut DDP
to solve it.

\if{
We now describe for our instance multi-cut and 2-cuts DDP.

\par {\textbf{Multi-cut DDP.}} In multi-cut DDP, at iteration $k$, we approximate in these equations
$\Q_t$ for $t=1,\ldots,T$, by
$\Gamma_t^{k-1}$, knowing that we start the method with
known affine lower-bounding functions
$\Gamma_t^0(x_t)=\langle a_t^0, x_t \rangle + b_t^0$ and that both  $\Gamma_T^k$ and $\Q_T$ are null
functions for all $k$. We also have that
$\Gamma_t^{k-1}(x_t)=\max_{\ell=0,\ldots,k-1} \langle a_t^\ell, x_t \rangle + b_t^\ell$
is a maximum of affine functions.
We now give the details of iteration $k$.
In the forward pass, we compute for $t=1,\ldots,T$,
and given $x_{t-1}^k$ (with $x_0^k=x_0$ given)
an optimal solution $x_t^k$ of
\begin{equation}\label{optforward}
\begin{array}{l}
\left\{
\begin{array}{l}
\displaystyle \min_{x_t} F_t\left(x_t,x_{t-1}^k\right)+\Gamma_t^{k-1}(x_t)\\
x_t \geq 0,\;\sum_{i=1}^n x_t(i)=1,
\end{array}
\right.
=\left\{
\begin{array}{l}
\displaystyle \min_{x_t,\theta_1,\theta_2} \theta_1+ \theta_2\\
x_t \geq 0,\;\sum_{i=1}^n x_t(i)=1,\\
\theta_1 \geq f_{ti}\left(x_t,x_{t-1}^k\right),\;i=1,\ldots,m,\\
\theta_2  \geq \left\langle a_t^{\ell},x_t \right\rangle + b_t^{\ell},\;\ell=0,\ldots,k-1,
\end{array}
\right.\\
\red{??} \geq  \left\{
\begin{array}{l}
\displaystyle \min_{x_t,\theta_1,\theta_2} \theta_1+ \theta_2\\
x_t \geq 0,\;\sum_{i=1}^n x_t(i)=1,\\
\theta_1 \geq 0.5\left(x_{t-1}^k\right)^T A_{t i} x_{t-1}^k+x_t^T B_{t i}^T x_{t-1}^k + 0.5 x_t^T C_{t i} x_t + \xi_{ti}^T\begin{pmatrix} x_{t-1}^k \\ x_t \end{pmatrix},\;\forall \  \ i,\\
\theta_2  \geq \left\langle a_t^{\ell},x_t \right\rangle + b_t^{\ell},\;\ell=0,\ldots,k-1,
\end{array}
\right.
\end{array}
\end{equation}
where 
$$
A_{t i}=\xi_{ti}(1:n)\xi_{ti}(1:n)^T+\lambda_i I_n,
$$
$$
B_{t i}=\xi_{ti}(1:n)\xi_{ti}(n+1:2n)^T,
$$
and
$$
C_{t i}=\xi_{ti}(n+1:2n)\xi_{ti}(n+1:2n)^T+\lambda_i I_n.
$$
We compute the upper bound $p_0^k$ on the optimal value.
We then update the approximations on $\mathcal{Q}_t$
backward from $t=T$ to $t=1$.
We start with $\Gamma_T^k$ the null function meaning
that $a_T^k$ and $b_T^k$ are null.
Then given $\Gamma_{t+1}^k$, for $t \in \{1,\ldots,T-1\}$, we compute
\begin{equation}\label{optforward2}
\red{??}\mathcal{V}_t^k\left(x_t^k\right)
=\left\{
\begin{array}{l}
\displaystyle \min_{x_{t+1},\theta_1,\theta_2} \theta_1+ \theta_2\\
x_{t+1} \geq 0,\;\sum_{i=1}^n x_{t+1}(i)=1,\\
\theta_1 \geq f_{t+1i}\left(x_{t+1},x_t^k\right),\;i=1,\ldots,m,\;\;\;\;\;\;\;[\lambda_{t+1i}]\\
\theta_2  \geq \left\langle a_{t+1}^{\ell},x_{t+1} \right\rangle + b_{t+1}^{\ell},\;\ell=0,\ldots,k.
\end{array}
\right.
\end{equation}
Denoting by $x_{t+1}^{\star k}$ an optimal solution
of \eqref{optforward2}
and by $\lambda_{t+1 i}$ an optimal Lagrange multiplier associated with constraint
$\theta_1 \geq f_{t+1i}(x_{t+1},x_t^k)$, we have
$$
\Q_t(x_t) \geq 
\mathcal{V}_t^k(x_t) \geq 
\mathcal{V}_t^k\left(x_t^k\right)
+ \left\langle s_t^k , x_t-x_t^k \right\rangle
$$
where
$s_t^k=\displaystyle \sum_{i=1}^m \lambda_{t+1 i}\nabla_{x_t} f_{t+1 i}(x_{t+1}^{\star k},x_t^k)$ is given by
$$ 
\sum_{i=1}^m \lambda_{t+1 i}\left(
\xi_{t+1 i}(1:n)+
\left(\xi_{t+1 i}(1:n)\xi_{t+1 i}(1:n)^T+\lambda_i I_n \right)x_t^k+2 \xi_{t+1 i}(1:n)\xi_{t+1 i}(n+1:2n)^T x_{t+1}^{\star k}
\right), 
$$
meaning that we have
$$
a_t^k=s_t^k,
\quad 
b_t^k=\mathcal{V}_t^k\left(x_t^k\right)-\left\langle s_t^k ,x_t^k \right\rangle.
$$
We then compute
 the lower bound
$m_0^k$ on the optimal value 
of \eqref{numproblem}
which is the optimal value of \eqref{optforward2} for
$t=0$. 

\par {\textbf{Two-cut DDP.}}
We now show how the FDDP framework can be used to construct a two-cut lower approximation of $\Q_t$, which gives rise to two-cut DDP.

We initialize with
\[
\Gamma_t^0(x_t)=\max\left(\left \langle a_t^{0 1}, x_t \right \rangle + b_t^{0 1},\,\left \langle a_t^{0 2}, x_t \right \rangle + b_t^{0 2}\right),
\]
where
\[
a_t^{0 2}=a_t^{0 1}=a_t^{0},
\qquad
b_t^{0 2}=b_t^{0 1}=b_t^{0},
\]
so that the initial lower approximation is simply the affine lower-bounding function
\[
\left \langle a_t^0, x_t \right \rangle + b_t^0.
\]
We also note that, for every $k$, both $\Gamma_T^k$ and $\Q_T$ are the null function.

At iteration $k$ and stage $t$, the function $\Q_t$ is approximated in the forward pass by
\[
\Gamma_t^k(x_t)=\max\left(\left \langle a_t^{k-1,1}, x_t \right \rangle + b_t^{k-1,1},\,\left \langle a_t^{k-1,2}, x_t \right \rangle + b_t^{k-1,2}\right).
\]
The trial points $x_t^k$ are obtained by performing the forward pass with $\Q_t$ replaced by $\Gamma_t^k$. Two cuts are then computed for each function $\Q_t$ as follows. Recalling that
\begin{equation}\label{firststodpa2Tbm}
\V_t^k(x_t)=
 \left\{
\begin{array}{l}
\displaystyle \min_{u_{t+1}\in \mathbb{R}^n} F_{t+1}(u_{t+1},x_t) +\Gamma_{t+1}^k( u_{t+1} )\\
u_{t+1} \in X_{t+1},
\end{array}
\right.
\end{equation}
we compute
\begin{equation}\label{firststodpa2Tbmt}
\V_t^k\left(x_t^k\right)=
 \left\{
\begin{array}{l}
\displaystyle \min_{x_{t+1},c} F_{t+1}\left(x_{t+1},x_t^k\right) + c \\
x_{t+1} \geq 0,\;\sum_{i=1}^n x_{t+1}(i)=1,\\
c \geq A_{tk1}(x_{t+1}), \;\;[\gamma_{tk}],\\
c \geq A_{tk2}(x_{t+1}),\;\;[1-\gamma_{tk}],
\end{array}
\right.
\end{equation}
where $A_{tk1}$ and $A_{tk2}$ denote the two affine functions defining $\Gamma_{t+1}^k$, and where $\gamma_{tk}$ and $1-\gamma_{tk}$ are the optimal Lagrange multipliers associated with the constraints
\[
c \geq A_{tk1}(x_{t+1})
\qquad\text{and}\qquad
c \geq A_{tk2}(x_{t+1}),
\]
respectively.

The new lower approximation of $\Q_{t+1}$ is then defined by
\[
\Gamma_{t+1}^{k+1}(\cdot)
=
\max\left(
\gamma_{tk}A_{tk1}(\cdot)+(1-\gamma_{tk})A_{tk2}(\cdot),\,
\ell_{\V_{t+1}^k}\left(\cdot;x_{t+1}^k\right)
\right),
\]
that is, by choosing the two-cut black box. This yields the new cut coefficients
\[
a_{t+1}^{k1},\; a_{t+1}^{k2},\; b_{t+1}^{k1},\; b_{t+1}^{k2}.
\]

It is easy to check that $\Gamma_{t+1}^{k+1}$ remains, like $\Gamma_{t+1}^k$, a lower approximation of $\Q_{t+1}$. Indeed, the relation $\Gamma_{t+2}^k \leq \Q_{t+2}$ implies
\[
\Q_{t+1}(x_{t+1}) \geq \V_{t+1}^k(x_{t+1}),
\]
and therefore
\[
\Q_{t+1}(x_{t+1}) \geq \V_{t+1}^k(x_{t+1})
\geq \ell_{\V_{t+1}^k}\left(x_{t+1};x_{t+1}^k\right).
\]
Moreover, since
\[
\Q_{t+1} \geq \max(A_{tk1},A_{tk2}),
\]
we also have
\[
\Q_{t+1} \geq \gamma_{tk}A_{tk1}+(1-\gamma_{tk})A_{tk2}.
\]
It follows that $\Gamma_{t+1}^{k+1}$ is again a lower approximation of $\Q_{t+1}$.

}\fi

\noindent {\textbf{Results.}} Our implementation was coded in Matlab and Gurobi was used to solve linear and quadratic programs. We compare multi-cut DDP, two-cut DDP with DSM (Direct Solution Method) to solve
several instances of \eqref{numproblem} where DSM
solves the problem pushing the objective in the constraints as
\begin{equation}\label{reformpb}
\begin{array}{l}
\displaystyle \min_{x_1,\ldots,x_T,\theta_1,\ldots,\theta_T} \sum_{t=1}^T \theta_t\\
x_t \geq 0,\;\displaystyle \sum_{i=1}^n x_t(i)=1,\\
\theta_t \geq f_{ti}(x_t,x_{t-1}), \ \forall \ t=1,\ldots,T, \ \forall \ i=1,\ldots,m,
\end{array}
\end{equation}
and calling Gurobi quadratic solver to solve \eqref{reformpb}. 
We first take $\lambda_i=100$ for all $i$
and $m=2$. We then consider three values of
$T$: $2,5,10$, and the following 13 values of
$n$: 10, 20, 50, 100, 200, 300, 400, 500, 600, 700,
800, 900, and 1000. We report in Figure \ref{empprobmodel1} the evolution of 
multi-cut DDP, two-cut DDP, and DSM CPU time as a function of state vector dimension $n$ for $T= 2$ (upper left plot), $T= 5$ (upper right plot),  and $T= 10$ (bottom plot). We observe that for this strongly convex problem,
multi-cut DDP and two cut DDP CPU time are similar and increase with a low rate, as a function of $n$. 
This is in accordance with our Theorem \ref{prop:null-strong} which indicates
that DDP does not suffer from the curse of dimensionality
when $n$ increases for strongly convex problems, i.e., 
CPU time in practice slowly increased with $n$ on our instances. DSM is much slower with a CPU time that increases with a larger rate as a function of state vector dimension $n$. 
We stopped DDP when a solution with relative accuracy
5\% was found. Correctness of the implementation was checked observing very close values between the approximate objective with DSM, and the upper and lower bounds at termination of both DDP methods.

\begin{figure}
    \centering
    \begin{tabular}{cc}
        \includegraphics[scale=0.6]{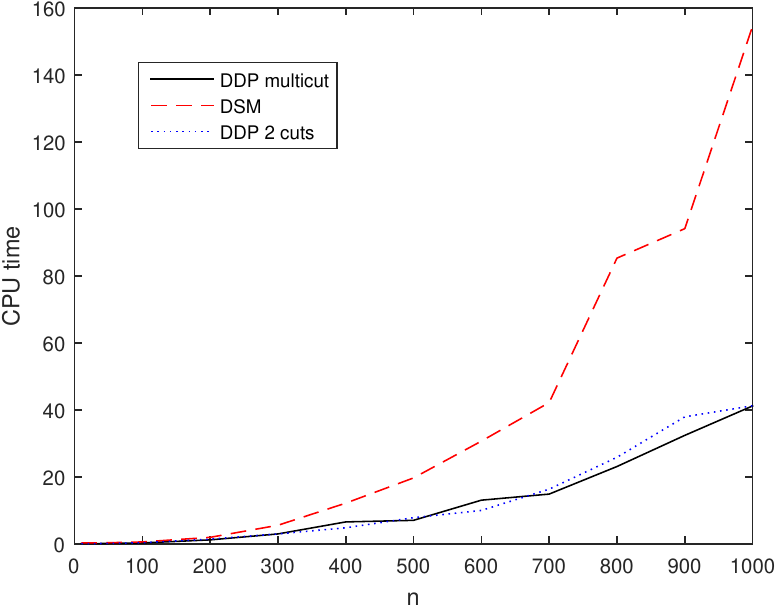}&
        \includegraphics[scale=0.6]{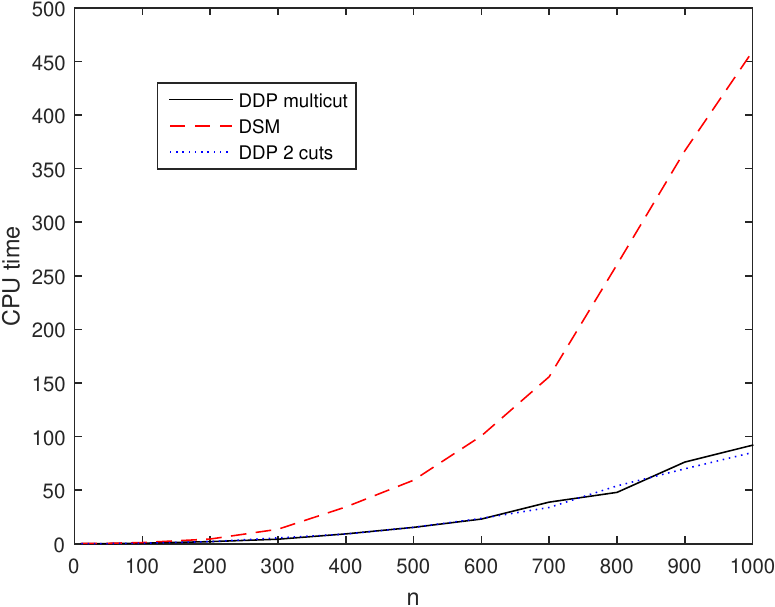}
     \end{tabular}
\begin{tabular}{c}
        \includegraphics[scale=0.6]{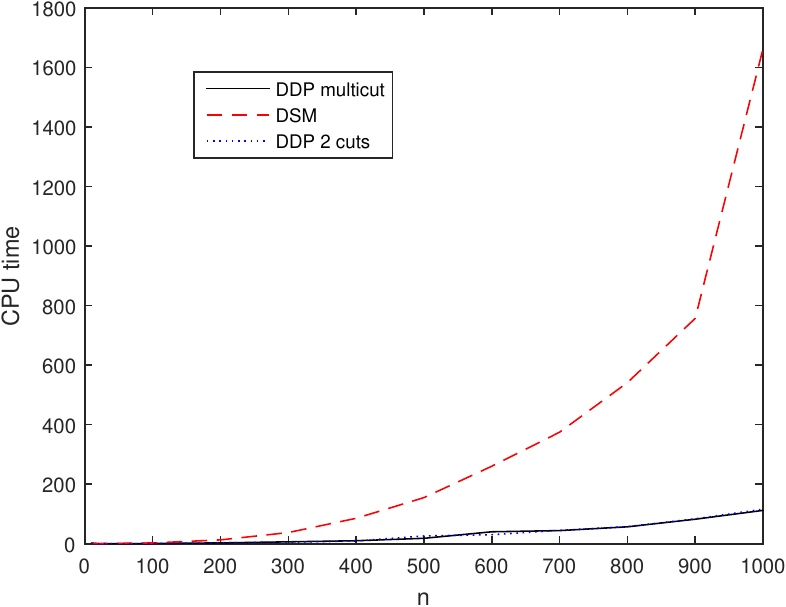}
     \end{tabular}
 \caption{Evolution of multi-cut DDP, two-cut DDP, and DSM CPU time as a function of state vector dimension $n$ for $m=2$, $\lambda_i=100$, $T= 2$ (upper left figure), $T= 5$ (upper right figure),  and $T= 10$ (bottom figure).}
\label{empprobmodel1}
\end{figure}

We now fix $m=2$, $T= 5$, $n=1000$ and let $\lambda_i$ vary in the set $\{0.1,0.5,1,10,100\}$. The evolution
of CPU time for multi-cut DDP, two-cut DDP, and DSM
as a function of the constant of strong convexity
$\lambda_i$ is given in Figure \ref{empprobmodel2}.
As forecast by our Theorem \ref{prop:null-strong},
CPU time to get an approximate solution decreases (here slowly) when the constant of strong convexity
$\lambda_i$ increases. 

\begin{figure}
    \centering
\begin{tabular}{c}
        \includegraphics[scale=0.6]{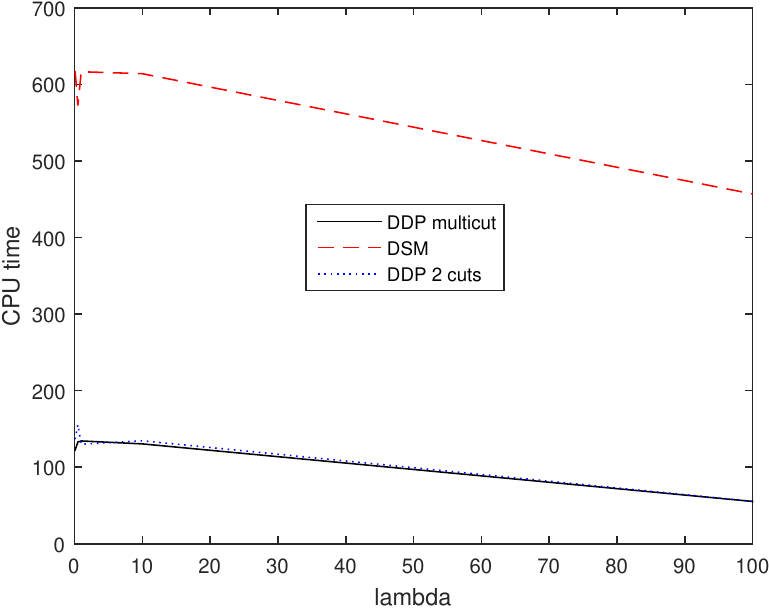}
     \end{tabular}
\caption{Evolution of multi-cut DDP, two-cut DDP, and DSM
CPU time
for $\lambda$ varying in the set $\{0.1,0.5,1,10,100\}$
 and for $m=2$, $T= 5$, and $n=1000$.}
\label{empprobmodel2}
\end{figure}

Finally, we fix $m=2$, $\lambda_i=100$, $n=10$ and let $T$ vary in the set $\{2,5,10,100,200,500,1000\}$. The evolution
of CPU time for multi-cut DDP, two-cut DDP, and DSM
as a function of $T$ is given in Figure \ref{empprobmodel3}.
As forecast by our Theorem \ref{prop:null-strong},
CPU time increases with a high rate with $T$ for DDP. However,
the increase is linear in these instances whereas
in theory it can be an exponential function of $T$.
For all three methods the increase is a similar function
of $T$.

\begin{figure}
    \centering
\begin{tabular}{c}
        \includegraphics[scale=0.6]{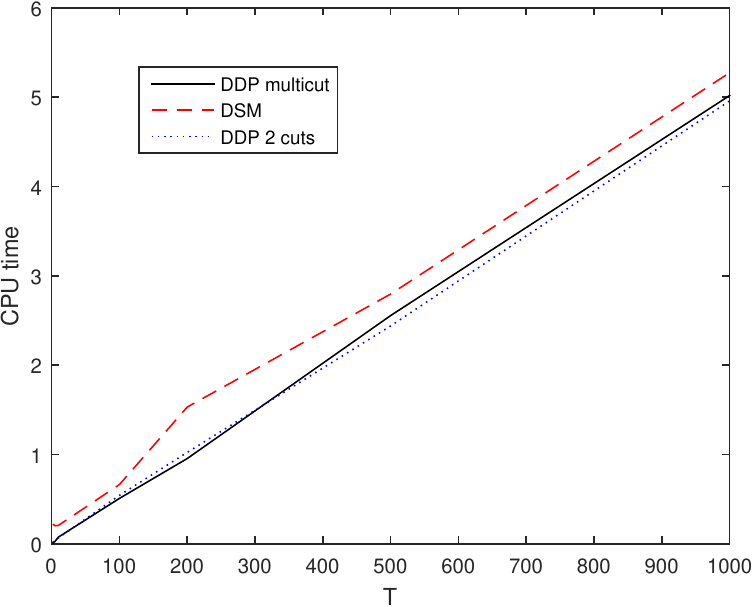}
     \end{tabular}
\caption{Evolution of multi-cut DDP, two-cut DDP, and DSM
CPU time
for $T$ varying in the set $\{2,5,10,100,200,500,1000\}$
 and for $m=2$, $n= 10$, and $\lambda_i=100$.}
\label{empprobmodel3}
\end{figure}

\section{Conclusion}

This paper introduces FDDP, a flexible dual dynamic programming framework for solving a class of convex optimization problems with linear coupling constraints. FDDP covers classical multi-cut DDP and the proposed two-cut DDP variant, which maintains only two affine cuts per iteration.
Our main result establishes an iteration-complexity bound for FDDP that is independent of the dimensions of the state vectors. The paper also shows how the complexity applies to constrained  convex problems via a penalized and perturbed reformulation.

A natural next step is to extend the present dimension-free analysis of FDDP to SDDP for solving multistage stochastic optimization problems. In the convex case, complexity results for SDDP were established in \cite{lan2020}; see also \cite{sunsddpc}, which proves the complexity of an SDDP-type variant with penalization for some nonconvex problems. An important open question is whether analogous complexity bounds for SDDP can be made dimension-free, namely independent of the dimensions of the state vectors, by extending the tools developed in this paper to the stochastic setting.



\appendix

\section{Technical lemmas}\label{App:tech}

\begin{lemma}\label{subgradValueFunc1}
Let
$G: \mathbb{R}^m \times \mathbb{R}^n \rightarrow \mathbb{R}$ be a convex
function and let $Y$ be a convex
compact set. Then, the value function 
 defined as
\begin{equation}\label{pbdefsc0}
            \V(x) := \min_{y\in Y} \, G(y,x) \quad \forall \ x \in \R^n
\end{equation}
is convex, finite everywhere, and $\partial \V(x) \ne \emptyset$ for every $x \in \R^n$.
Moreover,  the following three conditions are equivalent:
\begin{itemize}
    \item[a)] 
    $s \in \partial \V(x)$;
    \item[b)] there exists an optimal solution $y_x$ of \eqref{pbdefsc0} such that $(0,s) \in \partial G(y_x,x)$, in which case
\[
s \in \partial_x G(y_x,x);
\]
\item[c)]
$s$
is a Lagrange multiplier for
the constraint $z=x$ in the following reformulation of
the value function $\V$:
\begin{align}\label{pbdefsc1}
            \V(x) = \min \{ G(y,z): y \in Y,\; z=x\}.
            \end{align}
\end{itemize}
\end{lemma}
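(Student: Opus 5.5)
We will prove the three claims in order: convexity and finiteness, nonemptiness of $\partial\V(x)$, and then the equivalences. The plan is to show a)$\Leftrightarrow$c) directly from the definition of a Lagrange multiplier, and then a)$\Leftrightarrow$b) by adding and removing the minimization over $y$.

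\emph{Convexity, finiteness, and $\partial\V(x)\neq\emptyset$.} Since $G$ is convex and finite on $\mathbb{R}^m\times\mathbb{R}^n$, it is continuous. Hence, for each $x$, the minimum of $G(\cdot,x)$ over the compact set $Y$ is attained, and $\V(x)$ is finite. Convexity of $\V$ follows from the usual partial-minimization argument. Take $x,x'$ with minimizers $y_x,y_{x'}$ and $\alpha\in[0,1]$. The point $\alpha y_x+(1-\alpha)y_{x'}$ lies in $Y$, so
\[
\V(\alpha x+(1-\alpha)x')\le G(\alpha y_x+(1-\alpha)y_{x'},\alpha x+(1-\alpha)x')\le \alpha \V(x)+(1-\alpha)\V(x').
\]
A finite convex function on all of $\mathbb{R}^n$ is continuous, and its subdifferential is nonempty at every point (its epigraph has a supporting hyperplane at each boundary point, and that hyperplane cannot be vertical). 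This gives $\partial\V(x)\neq\emptyset$.

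\emph{a)$\Leftrightarrow$c).} The Lagrangian of \eqref{pbdefsc1} is $G(y,z)+\langle s,x-z\rangle$. Saying that $s$ is a Lagrange multiplier for $z=x$ means that
\[
\V(x)=\min_{y\in Y,\,z\in\mathbb{R}^n}\bigl\{G(y,z)-\langle s,z-x\rangle\bigr\}.
\]
The inequality ``$\ge$'' always holds, by taking $z=x$. So the condition is equivalent to $\V(x)\le G(y,z)-\langle s,z-x\rangle$ for all $y\in Y$ and $z\in\mathbb{R}^n$. Minimizing over $y\in Y$ turns this into $\V(z)\ge \V(x)+\langle s,z-x\rangle$ for all $z$, which is exactly a). Every step is reversible, so no constraint qualification is needed.

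\emph{a)$\Leftrightarrow$b).} Here $\partial G(y_x,x)$ should be read as the subdifferential of $G+\delta_{Y\times\mathbb{R}^n}$, i.e.\ the subgradient inequality is required only for $y'\in Y$. Without this reading, a) does not imply b) when $y_x$ lies on the boundary of $Y$, and this bookkeeping is the only real subtlety in the proof. With that reading:
\begin{itemize}
\item[] \emph{b)$\Rightarrow$a).} Start from $G(y',x')\ge G(y_x,x)+\langle s,x'-x\rangle=\V(x)+\langle s,x'-x\rangle$ for all $y'\in Y$ and all $x'$. Minimizing over $y'\in Y$ gives a).
\item[] \emph{a)$\Rightarrow$b).} Let $y_x$ be any minimizer, which exists by compactness. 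Then for all $y'\in Y$ and all $x'$,
\[
G(y',x')\ge \V(x')\ge \V(x)+\langle s,x'-x\rangle=G(y_x,x)+\langle (0,s),(y'-y_x,x'-x)\rangle .
\]
This is exactly $(0,s)\in\partial G(y_x,x)$.
\end{itemize}
Finally, fixing $y'=y_x$ in the subgradient inequality gives $G(y_x,x')\ge G(y_x,x)+\langle s,x'-x\rangle$ for every $x'$. This is the inclusion $s\in\partial_x G(y_x,x)$, and it holds in the ordinary sense, since $y_x\in Y$ and $x'$ ranges over all of $\mathbb{R}^n$.
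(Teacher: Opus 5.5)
Your proof is correct. It differs from the paper's mainly in being self-contained rather than citation-based.

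\textbf{The paper's route.} It takes convexity from Bertsekas (Prop.~3.3.3), a)$\Leftrightarrow$c) from Rockafellar's Theorem~9.3 on Lagrange multipliers, and a)$\Leftrightarrow$b) from Theorem~24(a) of Rockafellar's conjugate-duality notes. It writes out only the last step ($y=y_x$ yields $s\in\partial_x G(y_x,x)$), which you reproduce.

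\textbf{Your route.} You derive everything from definitions:
\begin{itemize}
\item the partial-minimization inequality for convexity;
\item a supporting-hyperplane argument for $\partial\V(x)\neq\emptyset$;
\item the identity $\V(x)=\inf_{y\in Y,\,z}\{G(y,z)-\langle s,z-x\rangle\}$, minimized over $y$, for a)$\Leftrightarrow$c);
\item the chain $G(y',x')\ge\V(x')\ge\V(x)+\langle s,x'-x\rangle$ for a)$\Leftrightarrow$b).
\end{itemize}

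\textbf{What each buys.} The citation route is shorter and places the lemma inside general perturbation duality. Yours makes explicit two things the paper leaves implicit. First, it fixes the sign convention and definition of multiplier under which c) means exactly a), with no constraint qualification needed. Second, it requires $\partial G(y_x,x)$ in b) to be the subdifferential of $G+\delta_{Y\times\R^n}$.

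That second point is substantive. Read literally, a)$\Rightarrow$b) fails: for $Y=[0,1]$ and $G(y,x)=y$ one has $\V\equiv 0$, $s=0\in\partial\V(x)$ and $y_x=0$, but $\partial G(0,x)=\{(1,0)\}$. Rockafellar's theorem is stated for an extended-valued bifunction with the constraint built in, so the paper's citation tacitly uses your reading. The only downstream use (Lemma~\ref{subgradValueFunc2}) needs just $s\in\partial_x G(y_x,x)$, which holds in the ordinary sense, so nothing later is affected. As a small bonus, your argument shows $(0,s)$ works for every minimizer $y_x$, not just some.
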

\begin{proof}
It follows from Proposition 3.3.3 of \cite{bertsekas2009convex} that $\V$ is a convex function. Since for every $x \in \R^{n}$, $G(\cdot,x)$ is continuous on the
compact set $Y$, function $\V$ is finite-valued
everywhere and therefore $\partial \V(x) \neq \emptyset$.

The equivalence between a) and c) follows from \cite[Theorem 9.3]{rockafellar1993lagrange}, as \eqref{pbdefsc0} has a feasible solution by compactness of $Y$ and $\V$ is lower semicontinuous since it is finite and convex.
Using Theorem 24(a) of \cite{rockafellar1974conjugate}, we also have
that a) is equivalent to the existence of
an optimal solution $y_x$  of \eqref{pbdefsc0} such that $(0,s)\in \partial G(y_x,x)$; equivalently, for all $(y,z)$,
$$
G(y,z)\ge G(y_x,x)+\left \langle 0,\,y-y_x\right \rangle+\left \langle s,\,z-x\right \rangle
= G(y_x,x)+\left \langle s,\,z-x\right \rangle.
$$
In particular, setting $y=y_x$ gives 
$$
G(y_x,z)\ge G(y_x,x)+\left \langle s,\,z-x\right \rangle
$$
for all $z$,
which is precisely the subgradient inequality characterizing $s\in \partial_x G(y_x,x)$.
\end{proof}

\begin{lemma}\label{subgradValueFunc2}
Let $X \subset \mathbb{R}^n$ 
and $Y \subset \mathbb{R}^m$
be convex compact sets and let
$F: \mathbb{R}^m \times \mathbb{R}^n \rightarrow \mathbb{R}$ and 
$\Gamma: \mathbb{R}^m \rightarrow \mathbb{R}$ be convex
functions.
 Then, the value function 
 defined as
\begin{equation}\label{pbdefsc2}
            \V(x) := \min_{y\in Y} \, \left\{G(y,x):=F(y,x) + \Gamma(y) \right\} \quad \forall \ x \in \R^n,
\end{equation}
is convex, finite everywhere, and $\partial \V(x) \ne \emptyset$ for every $x \in \R^n$.

If, in addition,
there exists $0 \leq M < +\infty$
such that
\begin{equation}\label{syx}
    \|s_x(y,x)\| \le M,
    \end{equation}
for every $(y,x) \in Y \times X$ 
and every subgradient 
$s_x(y,x) \in \partial_x F(y,x)$, 
    then $\V(\cdot)$
    restricted to $X$
        is $M$-Lipschitz continuous
        or equivalently $\|s\| \le M$ for every  $s \in \partial \V(x)$ and $x \in X$.
\end{lemma}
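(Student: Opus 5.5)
The first part of the statement (convexity, finiteness, nonemptiness of $\partial \V$) will be reduced to Lemma~\ref{subgradValueFunc1}. Set $G(y,x):=F(y,x)+\Gamma(y)$. Then $G$ is convex on $\mathbb{R}^m\times\mathbb{R}^n$, because it is the sum of a jointly convex function and a convex function of $y$ alone. It is also finite-valued, since $F$ and $\Gamma$ are real-valued. Since $Y$ is convex and compact, Lemma~\ref{subgradValueFunc1} applies directly to $G$. It gives that $\V$ is convex, finite everywhere, and satisfies $\partial\V(x)\neq\emptyset$ for all $x\in\mathbb{R}^n$.

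For the Lipschitz claim, I will first prove the subgradient bound $\|s\|\le M$ for $s\in\partial\V(x)$ with $x\in X$, and then deduce Lipschitz continuity as in the proof of Proposition~\ref{lipV}(a).

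Fix $x\in X$ and $s\in\partial \V(x)$. By the equivalence a)$\Leftrightarrow$b) in Lemma~\ref{subgradValueFunc1}, there is an optimal $y_x\in Y$ with $s\in\partial_x G(y_x,x)$. Since $\Gamma(y)$ does not depend on $x$, the partial function satisfies $G(y_x,\cdot)=F(y_x,\cdot)+\Gamma(y_x)$. Hence $\partial_x G(y_x,x)=\partial_x F(y_x,x)$, and so $s\in\partial_xF(y_x,x)$. Because $(y_x,x)\in Y\times X$, hypothesis \eqref{syx} yields $\|s\|\le M$.

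Next, given $x,x'\in X$, pick $s\in\partial\V(x)$. The subgradient inequality gives $\V(x)-\V(x')\le\langle s,x-x'\rangle\le M\|x-x'\|$. Swapping the roles of $x$ and $x'$ gives the bound on $|\V(x)-\V(x')|$.

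For the stated equivalence, I also need the converse direction: Lipschitz continuity on $X$ implies $\|s\|\le M$. For $x$ in the interior of $X$, this is standard: take $x'=x+\delta s/\|s\|$ in the subgradient inequality. At boundary points, however, subgradients of a convex function that is Lipschitz only on $X$ can be unbounded in directions normal to $X$. So I would interpret ``equivalently'' as meaning that the subgradient bound holds and implies the Lipschitz property, and I would not try to prove the full converse. Only the subgradient-bound direction is used in the paper, namely in Proposition~\ref{lipV}.

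The main obstacle is the step $s\in\partial_x G(y_x,x)$, which relies on Lemma~\ref{subgradValueFunc1}. Condition b) there gives the joint statement $(0,s)\in\partial G(y_x,x)$, and we use only its consequence for the partial subdifferential, as shown in that lemma's proof. The step also requires that the separable term $\Gamma$ does not alter the $x$-partial subdifferential. This is immediate, because $\Gamma(y_x)$ is a constant in $x$.
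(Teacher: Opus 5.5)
Your proof is correct and follows essentially the same route as the paper. You apply Lemma~\ref{subgradValueFunc1} to $G(y,x)=F(y,x)+\Gamma(y)$ for the first part, use $s\in\partial_x G(y_x,x)=\partial_x F(y_x,x)$ to get $\|s\|\le M$, and swap $x,x'$ in the subgradient inequality to get the Lipschitz bound. Your decision not to prove the general converse is justified: the paper's converse argument only tests $x'\in X$, so it cannot bound $\|s\|$ at boundary points (e.g., $\V(x)=\max\{0,-Kx\}$ is $0$-Lipschitz on $X=[0,1]$ yet $-K\in\partial\V(0)$). Nothing is lost, because under \eqref{syx} you establish both properties directly.
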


\begin{proof} We have that $\V$ is convex, finite everywhere, and $\partial \V(x) \ne \emptyset$ for every $x \in \R^n$ by a direct application of Lemma \ref{subgradValueFunc1} to $G(y,x):=F(y,x) + \Gamma(y).$ 

For the second part, assume that $\|s_x(y,x)\| \le M$, for every $(y,x) \in Y \times X$ and every subgradient $s_x(y,x) \in \partial_x F(y,x) = \partial_x G(y,x)$. Thus, for any $x \in X$ and $s \in \partial \V(x),$ Lemma \ref{subgradValueFunc1} shows that $s \in \partial_x G(y_x,x)$ for some $y_x \in Y$, so that $\|s\| \le M.$ 

To show the last equivalence, we proceed as follows. Assume $\|s\|\le M$ for all $x\in X$ and $s\in\partial \V(x)$. Then for any $x,x'\in X$ and $s\in\partial \V(x)$,
\[\V\left(x'\right)-\V(x)\ge \left\langle s,x'-x\right \rangle \ge -\|s\|\,\left\|x'-x\right\|_{*}\ge -M\left\|x'-x\right\|_{*}.\] 
Switching $x,x'$ yields
$|\V(x')-\V(x)|\le M\|x'-x\|_{*}$, i.e., $\V$ is $M$-Lipschitz on $X$ with respect to \ $\|\cdot\|_{*}$.
Conversely, if $\V$ is $M$-Lipschitz on $X$ w.r.t.\ $\|\cdot\|_{*}$, then for any $x\in X$, $s\in\partial \V(x)$, and $x'\in X$,
\[\left\langle s,x'-x\right \rangle \le \V\left(x'\right)-\V(x)\le M\left\|x'-x\right\|_{*},\] 
hence $\|s\|\le M$.
\end{proof}

\begin{lemma}\label{LemApp3}
Consider
\begin{equation}\label{probf}
f^\star:=\min\{f(x):Ax=b,\ x\in X\},
\end{equation}
and assume that its optimal solution set $X^\star$ is nonempty. Let $p^\star$ be a Lagrange multiplier associated with this problem. For $\rho>0$, define
\begin{equation}\label{regf}
    f_\rho^\star
:=
\min_{x\in X}
\left\{
f_\rho(x)
:=
f(x)+\frac{\rho}{2}\|Ax-b\|^2
\right\}.
\end{equation}
Let $\bar x\in X$ be a $\bar\varepsilon$-solution for problem \eqref{regf}. If
\begin{equation}\label{ineq:rho}
    \rho\ge \frac{4\|p^\star\|+8}{\bar\varepsilon},
\end{equation}
then
\begin{equation}\label{Lem3result}
f(\bar x)-f^\star\le \bar\varepsilon,
\qquad
\|A\bar x-b\|\le \bar\varepsilon.
\end{equation}
That is, $\bar x$ is a $\bar\varepsilon$-solution of problem \eqref{probf} in the sense that it has objective error at most $\bar\varepsilon$ and constraint violation at most $\bar\varepsilon$.
\end{lemma}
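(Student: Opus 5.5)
The plan is the standard quadratic-penalty argument built on a saddle-point inequality for the Lagrange multiplier $p^\star$. Let $x^\star\in X^\star$. By optimality of $p^\star$, for every $x\in X$ we have
\[
f(x)+\langle p^\star,Ax-b\rangle \ge f^\star .
\]
Since $x^\star$ is feasible, $f_\rho^\star\le f_\rho(x^\star)=f^\star$. Because $\bar x$ is a $\bar\varepsilon$-solution of \eqref{regf}, it follows that
\[
f(\bar x)+\frac{\rho}{2}\|A\bar x-b\|^2\le f_\rho^\star+\bar\varepsilon\le f^\star+\bar\varepsilon .
\]
This immediately gives the objective bound $f(\bar x)-f^\star\le \bar\varepsilon$, since the penalty term is nonnegative.

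For the feasibility bound, set $v:=\|A\bar x-b\|$. The Lagrangian inequality gives $f(\bar x)-f^\star\ge -\|p^\star\| v$. Combining this with the inequality above yields the quadratic inequality
\[
\frac{\rho}{2}v^2-\|p^\star\| v-\bar\varepsilon\le 0 .
\]
Solving it gives
\[
v\le \frac{\|p^\star\|+\sqrt{\|p^\star\|^2+2\rho\bar\varepsilon}}{\rho}\le \frac{2\|p^\star\|}{\rho}+\sqrt{\frac{2\bar\varepsilon}{\rho}} .
\]
Using \eqref{ineq:rho}, the first term is at most $\bar\varepsilon/2$. The second term is at most $\sqrt{2\bar\varepsilon^2/(4\|p^\star\|+8)}\le \bar\varepsilon/2$.

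The main obstacle is this last estimate. It needs $2\bar\varepsilon/\rho\le \bar\varepsilon^2/4$, that is $\rho\ge 8/\bar\varepsilon$, which the constant $8$ in \eqref{ineq:rho} provides. So I would check the constants carefully and split the bound into the two terms above. If I wanted the squared violation $\le\bar\varepsilon^2$ as in \eqref{def:solution}, it would be the same statement, since $v\le\bar\varepsilon$ is equivalent to $v^2\le\bar\varepsilon^2$.

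I would also make explicit what "Lagrange multiplier" means here, namely that $f^\star=\min_{x\in X}\{f(x)+\langle p^\star,Ax-b\rangle\}$. That equality is exactly what the first inequality in the argument uses.
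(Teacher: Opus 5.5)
Your proposal is correct and follows the paper's proof essentially line by line. The paper also uses $f_\rho^\star\le f_\rho(x^\star)=f^\star$ for the objective bound, then the multiplier inequality $f(\bar x)-f^\star\ge-\|p^\star\|\,\|A\bar x-b\|$ to get the quadratic inequality in $r=\|A\bar x-b\|$, and finishes with the same root bound $2\|p^\star\|/\rho+\sqrt{2\bar\varepsilon/\rho}$ and the same use of the constant $4\|p^\star\|+8$. The only difference is cosmetic: you derive the multiplier inequality directly from $f^\star=\min_{x\in X}\{f(x)+\langle p^\star,Ax-b\rangle\}$ and Cauchy--Schwarz, whereas the paper cites Corollary~2 of Lan and Monteiro for it.
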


\begin{proof}
Let $x^\star\in X^\star$. Since $Ax^\star=b$, we have
\[
f_\rho^\star\le f_\rho(x^\star)=f(x^\star)=f^\star.
\]
Thus
\begin{equation}\label{ineq:frho}
    f(\bar x)-f^\star
+
\frac{\rho}{2}\|A\bar x-b\|^2
=
f_\rho(\bar x)-f^\star \le f_\rho(\bar x)-f_\rho^\star
\le \bar\varepsilon.
\end{equation}
Dropping the nonnegative penalty term gives the first inequality in \eqref{Lem3result}.
It follows from Corollary~2 in \cite{LanMonteiro2013} that for $x\in X$
\[
f(x)-f^\star \ge -\|p^\star\|\,\|Ax-b\|.
\]
Applying this inequality at $x=\bar x$ and setting
$r:=\|A\bar x-b\|$, we obtain
\[
-\|p^\star\|r+\frac{\rho}{2}r^2 \le f(\bar x)-f^\star + \frac{\rho}{2}\|A\bar x-b\|^2 \stackrel{\eqref{ineq:frho}}\le \bar\varepsilon.
\]
Hence
\[
r
\le
\frac{\|p^\star\|+\sqrt{\|p^\star\|^2+2\rho\bar\varepsilon}}{\rho}
\le
\frac{2\|p^\star\|}{\rho}
+
\sqrt{\frac{2\bar\varepsilon}{\rho}}.
\]
It follows from \eqref{ineq:rho} that
\begin{equation*}\label{Lem3result2}
r\le
\bar\varepsilon
\left(
\frac{2\|p^\star\|}{4\|p^\star\|+8}
+
\sqrt{\frac{2}{4\|p^\star\|+8}}
\right)
\le \bar\varepsilon,
\end{equation*}
which gives the second inequality in \eqref{Lem3result}.
\end{proof}

\section{Proof of Proposition~\ref{twocutsp}}


First, noting that $\overline \Gamma_t^{k-1}$ is an affine function satisfying $\overline \Gamma_t^{k-1} \le \Q_t$ and $\V_t(\cdot;\Gamma_{t+1}^k) \in {\cal B}_{X_t}(\Q_t)$, it thus follows from \eqref{def:Atk} that
\[
\overline \Gamma_t^k(\cdot) \le \beta_t^k \Q_t(\cdot) + \left(1-\beta_t^k\right) \V_t\left(\cdot;\Gamma_{t+1}^k\right) \le \beta_t^k \Q_t(\cdot) + \left(1-\beta_t^k\right) \Q_t(\cdot) = \Q_t(\cdot),
\]
and hence that $\overline \Gamma_t^k \in {\cal B}_{X_t}(\Q_t) $.

Next, we prove \eqref{eq:mirror} holds. The fact that $x_t^k$ and $\beta_t^k$ are primal and dual solutions of \eqref{eq:vtk} implies that
\begin{align}
    &m_{t-1}^k = F_t\left(x_t^k,x_{t-1}^k\right) + \beta_t^k \overline \Gamma_t^{k-1}\left(x_t^k\right) + \left(1-\beta_t^k\right) \ell_t^k\left(x_t^k\right), \label{eq:value} \\
    &0 \in \partial F_t\left(\cdot,x_{t-1}^k\right)\left(x_t^k\right) + \beta_t^k \nabla \overline \Gamma_t^{k-1}\left(x_t^k\right) + \left(1-\beta_t^k\right) s_t^{k-1}. \label{eq:optcond}
\end{align}
Using \eqref{def:vtk}, we also have
$m_{t-1}^k = F_t(x_t^k,x_{t-1}^k) + \Gamma_t(x_t^k)$,
which together with \eqref{def:Atk} and \eqref{eq:value} yields
\[
\overline \Gamma_t^k\left(x_t^k\right) \stackrel{\eqref{def:Atk}}= \beta_t^k \overline \Gamma_t^{k-1}\left(x_t^k\right) + \left(1-\beta_t^k\right) \ell_t^k\left(x_t^k\right) \stackrel{\eqref{eq:value}}= \Gamma_t^k\left(x_t^k\right),
\]
so the first identity of \eqref{eq:mirror} holds.
Moreover, \eqref{def:Atk} and \eqref{eq:optcond} show that 
\[
x_t^k = \underset{u_t\in X_t}\argmin F_t\left(u_t,x_{t-1}^k\right) + \overline \Gamma_t^k( u_t),
\]
which is the second identity of \eqref{eq:mirror}.
In view of Definition \ref{defmirror}, 
we have thus proved the proposition.

\bibliographystyle{plain}
\bibliography{ref}

\end{document}

%% file: def.tex
\usepackage{amsmath}
\usepackage{amsfonts}
\usepackage{latexsym}
\usepackage{amssymb}

\newtheorem{thm}{Theorem}[section]
\newtheorem{theorem}[thm]{Theorem}

\newtheorem{lemma}[thm]{Lemma}

\newtheorem{proposition}[thm]{Proposition}

\newtheorem{definition}[thm]{Definition}

\newtheorem{remark}[thm]{Remark}

\newcommand{\beq}{\begin{equation}}
\newcommand{\eeq}{\end{equation}}
\newcommand{\beqa}{\begin{eqnarray}}
\newcommand{\eeqa}{\end{eqnarray}}
\newcommand{\beqas}{\begin{eqnarray*}}
\newcommand{\eeqas}{\end{eqnarray*}}
\newcommand{\bi}{\begin{itemize}}
\newcommand{\ei}{\end{itemize}}

\newcommand{\vgap}{\vspace{.1in}}

\newcommand{\nn}{\nonumber}

\newcommand{\R}{\mathbb{R}}

\newcommand{\inner}[2]{\langle #1,#2\rangle}

\newcommand{\dom}{\mathrm{dom}\,}

\newcommand{\argmin}{\mathrm{argmin}\,}

\newcommand{\bConv}[1]{\overline{\mbox{\rm Conv}}\,(\R^{#1})}

\newcommand{\mConv}[1]{\overline{\mbox{\rm Conv}}_\mu\,(\R^{#1})}